\tikzstyle{smallball blue}=[shading=radial,outer color=blue,inner color=white,line width=0.2pt]
\tikzstyle{smallball red}=[shading=radial,outer color=red,inner color=white,line width=0.2pt]
   \definecolor{cites}{rgb}{0.50 , 0.00 , 0.00}  
   \definecolor{urls} {rgb}{0.00 , 0.00 , 0.50}  
   \definecolor{links}{rgb}{0.00 , 0.00 , 0.50}   
\newcommand\C{{\mathbb C}}
\newcommand\D{{\mathbb D}}
\newcommand\N{{\mathbb N}} 
\renewcommand\O{{\mathcal O}}
\newcommand\T{{\mathbb T}}
\newcommand\R{{\mathbb R}}
\newcommand\W{{\mathcal W}}
\newcommand\Z{{\mathbb Z}}
\newcommand\zero{\textbf{0}}
\newcommand\eps{{\varepsilon}}
\newcommand\spec{{\rm spec}\,}  
\newcommand\specn{{\rm spec}}   
\newcommand\speps{{\rm spec}_\eps}
\newcommand\diam{{\rm diam\,}}
\newcommand\diag{{\rm diag}}
\newcommand\supp{{\rm supp\,}}
\newcommand{\abs}[1]{\left|#1\right|}
\newcommand\norm[1]{\left\|#1\right\|}
\newcommand\BDO{{\rm BDO}}
\newcommand\BO{{\rm BO}}
\newcommand\lW{{\llbracket}}
\newcommand\rW{{\rrbracket}}
\newcommand{\Rr}{{\color{gray}\Rc}}
\newcommand{\rr}{{\color{gray}\rc}}
\newtheorem{theorem}{Theorem}[section]
\newenvironment{example}
 {\par\noindent\refstepcounter{theorem}{\bf Example \thetheorem}}
 {\raisebox{1mm}{\framebox{}}\pagebreak[2]}
\newcommand\Proofend{\rule{2mm}{2mm}}
\newenvironment{proof}
 {\par\noindent{\bf Proof.}}
 {\Proofend\pagebreak[2]}
\numberwithin{figure}{section}  
\begin{document}
\title{\bf Recycling Givens rotations for the\\efficient approximation of pseudospectra\\of band-dominated operators}
\author{{\sc Marko Lindner}\footnote{Email: {\tt lindner@tuhh.de}}\quad and\quad {\sc Torge Schmidt}\footnote{Email: {\tt torge.schmidt@tuhh.de}}}
\date{\today}
\maketitle
\begin{quote}
\renewcommand{\baselinestretch}{1.0}
\footnotesize {\sc Abstract.}
We study spectra and pseudospectra of certain bounded linear operators on $\ell^2(\Z)$. The operators are generally non-normal, and their matrix representation has a characteristic off-diagonal decay. Based on a result of Chandler-Wilde, Chonchaiya and Lindner for tridiagonal infinite matrices, we demonstrate an efficient algorithm for the computation of upper and lower bounds on the pseudospectrum of operators that are merely norm limits of band matrices -- the so-called band-dominated operators. After approximation by a band matrix and fixing a parameter $n\in\N$, one looks at $n$ consecutive columns $\{k+1,...,k+n\}$, $k\in\Z$, of the corresponding matrix and computes the smallest singular value of that section via QR factorization. We here propose a QR factorization by a sequence of Givens rotations in such a way that a large part of the computation can be reused for the factorization of the next submatrix -- when $k$ is replaced by $k+1$. The computational cost for the next factorization(s) is $\O(nd)$ as 
opposed to a naive implementation with $\O(nd^2)$, where $d$ is the bandwidth. So our algorithm pays off for large bands, which is attractive when approximating band-dominated operators with a full (i.e.~not banded) matrix.
\end{quote}

\noindent
{\it Mathematics subject classification (2000):} Primary 65J10; Secondary 47A10, 47B36, 65F15.\\
{\it Keywords:} 

\section{Introduction, Notations, and Main Results} \label{sec:intro}
{\bf Band-dominated operators.} 
We study bounded linear operators on the space $\ell^2:=\ell^2(\Z)$ of square-summable bi-infinite complex sequences $x=(x_k)_{k\in\Z}$ with $\|x\|=\sqrt{\sum_{k\in\Z} |x_k|^2}<\infty$. Each linear operator $A$ on $\ell^2$ acts via matrix-vector multiplication with a bi-infinite matrix $(a_{ij})_{i,j\in\Z}$ -- and vice versa. We say that $A$ is a {\sl band operator} if its matrix $(a_{ij})$ is banded (i.e.~supported on only finitely many diagonals) and has uniformly bounded entries, so that $A$ is a bounded linear operator. In that case, $d:=\max\{|i-j|:a_{ij}\ne 0\}$ is called the {\sl bandwidth} of $A$. Moreover,
$A$ is called a {\sl band-dominated operator} if it is the limit, in the induced operator norm on $\ell^2$, of a sequence of band operators; in particular it is a bounded operator, too, and its matrix entries decay with their distance from the main diagonal.

{\bf Pseudospectra.}
Because the spectrum of a non-normal operator $A$ can be highly unstable under small perturbations of $A$, one is interested in the so-called $\eps$-pseudospectrum of $A$, that is,
\[
\speps A\ :=\ \{\lambda\in\C:\|(A-\lambda I)^{-1}\|>1/\eps\}\ =\ \bigcup_{\|T\|<\eps}\spec(A+T),\qquad\eps>0.
\]
Here we agree upon writing $\|B^{-1}\|=\infty$ if $B$ is not invertible. The second equality sign (see e.g. \cite{TrefEmbBook}) shows that $\speps A$ measures the sensitivity of $\spec A$ w.r.t.~additive perturbations of $A$ of norm $<\eps$. For normal operators $A$, $\speps A$ is the $\eps$-neigbourhood of $\spec A$; otherwise it is generally larger (but never smaller). The interest in pseudospectra has been increasing over the last two decades. See \cite{TrefEmbBook} for many more reasons to study pseudospectra and for more references. 

{\bf The lower norm.}
As a counterpart to the operator norm $\|A\|=\sup_{\|x\|=1}\|Ax\|$, we look at the quantity
\[
\nu(A)\ :=\ \inf_{\|x\|=1}\|Ax\|,
\]
that is sometimes (by abuse of notation) called the {\sl lower norm} of $A$. While $\|A\|$ is the largest singular value of $A$, $\nu(A)$ is the smallest -- provided maximum/minimum exist, such as in the case of finite matrices. It is well-known (see e.g. \cite[p.69f]{LiBook}) that $\nu(A)>0$ holds iff $A$ is injective and has a closed image; moreover, the equality
\[
\|A^{-1}\|\ =\ 1/\min(\nu(A),\nu(A^*))
\]
holds with $1/0:=\infty$ indicating non-invertibility of $A$. In particular, $A$ is invertible iff $\nu(A)$ and $\nu(A^*)$ are both nonzero, in which case they coincide. Together with the definition of $\speps A$ it follows that
\begin{equation} \label{eq:spepsnu}
\speps A\ =\ \{\lambda\in\C:\min\!\big(\nu(A-\lambda I),\nu((A-\lambda I)^*)\big)<\eps\}.
\end{equation}

{\bf Approximating the lower norm of band-dominated operators.}
For $x\in\ell^{2}$, we denote its support by $\supp x:=\{j\in\Z:x_j\ne 0\}$, and we say that a bounded set $J\subset\Z$ has diameter $\diam J:=\max\{|i-j|:i,j\in J\}$.
One of the main observations of \cite{CW.Heng.ML:UpperBounds} (also see \cite{HengPhD,LiSei:BigQuest}) is that the lower norm\footnote{A symmetric result holds for the norm, $\|A\|$, see Proposition 3.4 and inequality (ONL) in \cite{HagLiSei}.} of a band-dominated operator $A$ can be realized, up to a given $\delta>0$, by a unit element $x\in\ell^{2}$ with bounded support, say of diameter less than $n\in\N$ (dependent on $\delta$, of course). So one has
\begin{equation} \label{eq:nusqueeze}
\nu(A)\ \le\ \|Ax\|\ \le\ \nu(A)+\delta
\end{equation}
for a particular $x\in\ell^{2}$ with $\|x\|=1$ and $\diam(\supp x)<n$. If $\supp x$ were known to be contained in the discrete interval $J^n_k:=\{k+1,...,k+n\}$ with a given $k\in\Z$, then the optimal term $\|Ax\|$ in \eqref{eq:nusqueeze} could be practically computed as the lower norm / smallest singular value of the restriction of $A$ to $\ell^{2}(J^n_k)$. Since $\diam(\supp x)<n$, the support must be contained in some interval $J^n_k$ with $k\in\Z$. Unfortunately, this $k$ is in general not known.  It ``remains'' to look at -- and minimize over -- all $k\in\Z$: 
\begin{equation} \label{eq:nusqueezek}
\nu(A)\ \le\ \inf_{k\in \Z}\nu(A|_{\ell^{2}(J^n_k)})\ \le\ \nu(A)+\delta
\end{equation}
If $A$ is a band operator then $A|_{\ell^{2}(J^n_k)}$ corresponds to a finite rectangular matrix (containing columns $k+1,...,k+n$ of the infinite matrix, truncated to their joint support that is finite -- due to the band structure), so that the smallest singular value, $\nu(A|_{\ell^{2}(J^n_k)})$, can be computed effectively.
However, consideration of all $k\in\Z$ is, in general, of course practically impossible -- unless the set of all restrictions $\{A|_{\ell^{2}(J^n_k)}:k\in\Z\}$ is finite, 
e.g. when $A$ is eventually periodic or otherwise structured.

It is clear that the size $n$ has to be increased in order to decrease the error $\delta$ in \eqref{eq:nusqueeze} and \eqref{eq:nusqueezek}. The analysis in \cite{CW.Heng.ML:UpperBounds} (also see \S 3 and 4 in \cite{HengPhD}) shows, for the particular case of tridiagonal (bandwidth $d=1$) bi-infinite matrices $(a_{ij})_{i,j\in\Z}$, that $\delta$ is of the order $1/n$; more precisely,
\begin{equation}\label{eq:epsn}
\delta\ \le\ 2\left(\sup_{j\in\Z}|a_{j+1,j}|+\sup_{j\in\Z}|a_{j-1,j}|\right)\sin\frac\pi{2n+2}\ \in\ \O\left(\frac 1n\right),
\end{equation}
The constant turns out to be optimal. We make use of that result by two simple steps of reduction: 
\begin{enumerate}
\item[$(i)$] Given an accuracy $\eta>0$, approximate our band-dominated operator $A$ (with a generally full matrix) by a band operator $B$ with $\|A-B\|\le\eta$ and use the contractivity of $\nu(\cdot)$, so that
\begin{equation}\label{eq:nu.contr}
|\nu(A)-\nu(B)|\le\|A-B\|\le\eta,\quad\textrm{as well as}\quad |\nu(A^*)-\nu(B^*)|\le\|A^*-B^*\|\le\eta.
\end{equation}

\item[$(ii)$] Use that the matrix of the band operator $B$ is block-tridiagonal (with block size equal to the band width of $B$, see Figure 1) and that the results of \cite{CW.Heng.ML:UpperBounds,HengPhD} even apply to tridiagonal matrices with operator entries\footnote{In that case, $|a_{j+1,j}|$ and $|a_{j-1,j}|$ in \eqref{eq:epsn} are interpreted as operator norms.} -- hence to block-tridiagonal matrices.
\end{enumerate}

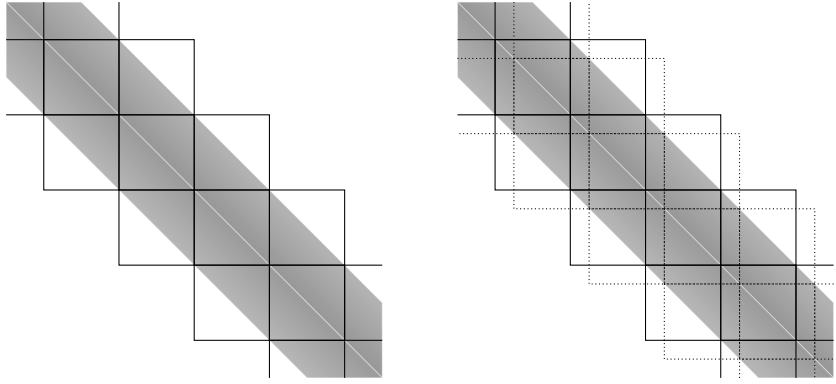
\begin{figure}[h]
\begin{center}
\begin{tikzpicture}[every node/.style={minimum size=1cm}]

    \begin{scope}[scale=0.5]
    \shadedraw[left color=white, right color=white, middle color=black, shading angle=-45,draw=white, opacity=0.4] (0,0) -- (10,-10)  -- (10,-8) -- (2,0) --cycle;
    \shadedraw[left color=white, right color=white, middle color=black, shading angle=-45,draw=white, opacity=0.4] (0,0) -- (10,-10) -- (8,-10) -- (0,-2) --cycle;
 
    \draw (0,-1) --(1,-1) --(1,-3) -- (0,-3);
    \draw (1,0) -- (1,-1) -- (3,-1)--(3,0);
    \draw (1, -1) rectangle (3,-3);
    \draw (1,-3) rectangle (3,-5);
    
    \draw (3, -1) rectangle (5,-3);
    \draw (3,-3) rectangle (5,-5);
    \draw (3, -5) rectangle (5,-7);
    
    \draw (5, -3) rectangle (7,-5);
    \draw (5,-5) rectangle (7,-7);
    \draw (5, -7) rectangle (7,-9);
    
    \draw (7, -5) rectangle (9,-7);
    \draw (7,-7) rectangle (9,-9);
    
    \draw (10,-7) -- (9,-7) -- (9,-9) -- (10,-9);
    \draw (7,-10) -- (7,-9) -- (9,-9) -- (9,-10);

    \end{scope}
      
    \begin{scope}[scale=0.5,xshift=12cm]
     \shadedraw[left color=white, right color=white, middle color=black, shading angle=-45,draw=white, opacity=0.4] (0,0) -- (10,-10)  -- (10,-8) -- (2,0) --cycle;
    \shadedraw[left color=white, right color=white, middle color=black, shading angle=-45,draw=white, opacity=0.4] (0,0) -- (10,-10) -- (8,-10) -- (0,-2) --cycle;
 
    \draw (0,-1) --(1,-1) --(1,-3) -- (0,-3);
    \draw (1,0) -- (1,-1) -- (3,-1)--(3,0);
    \draw (1, -1) rectangle (3,-3);
    \draw (1,-3) rectangle (3,-5);
    
    \draw (3, -1) rectangle (5,-3);
    \draw (3,-3) rectangle (5,-5);
    \draw (3, -5) rectangle (5,-7);
    
    \draw (5, -3) rectangle (7,-5);
    \draw (5,-5) rectangle (7,-7);
    \draw (5, -7) rectangle (7,-9);
    
    \draw (7, -5) rectangle (9,-7);
    \draw (7,-7) rectangle (9,-9);
    
    \draw (10,-7) -- (9,-7) -- (9,-9) -- (10,-9);
    \draw (7,-10) -- (7,-9) -- (9,-9) -- (9,-10);

    \draw[densely dotted] (0,-1.5) --(1.5,-1.5) --(1.5,-3.5) -- (0,-3.5);
    \draw[densely dotted] (1.5,0) -- (1.5,-1.5) -- (3.5,-1.5)--(3.5,0);
    \draw[densely dotted] (1.5, -1.5) rectangle (3.5,-3.5);
    \draw[densely dotted] (1.5,-3.5) rectangle (3.5,-5.5);
    
    \draw[densely dotted] (3.5, -1.5) rectangle (5.5,-3.5);
    \draw[densely dotted] (3.5,-3.5) rectangle (5.5,-5.5);
    \draw[densely dotted] (3.5, -5.5) rectangle (5.5,-7.5);
    
    \draw[densely dotted] (5.5, -3.5) rectangle (7.5,-5.5);
    \draw[densely dotted] (5.5,-5.5) rectangle (7.5,-7.5);
    \draw[densely dotted] (5.5, -7.5) rectangle (7.5,-9.5);
    
    \draw[densely dotted] (7.5, -5.5) rectangle (9.5,-7.5);
    \draw[densely dotted] (7.5,-7.5) rectangle (9.5,-9.5);
    
    \draw[densely dotted] (10,-7.5) -- (9.5,-7.5) -- (9.5,-9.5) -- (10,-9.5);
    \draw[densely dotted] (7.5,-10) -- (7.5,-9.5) -- (9.5,-9.5) -- (9.5,-10);
    \end{scope}
\end{tikzpicture}
\caption{Left: A banded matrix (support shown in gray) is  turned into block-tridiagonal form with blocks of according size. Right: The dotted blocks equally do the job of turning the banded matrix into block-tridiagonal form. There are $b$
different ways of positioning a $b\times b$ grid along the main diagonal. Two of them are depicted here (solid and dotted lines).}
\label{fig:1.1}
\end{center}
\end{figure}

We discuss further details of steps $(i)$ and $(ii)$ in Section \ref{sec:red}.

{\bf Approximating pseudospectra of band-dominated operators.}
From \eqref{eq:spepsnu} and the above approximations and bounds on the lower norm we conclude approximations and bounds on the pseudospectrum:

Inequality \eqref{eq:nusqueezek} and its counterpart for the adjoint, $A^*$, lead to
\[
\min(\nu(A),\nu(A^*))\ \le\ \inf_{k\in \Z}\min\!\big(\nu(A|_{\ell^{2}(J^n_k)}),\nu(A^*|_{\ell^{2}(J^n_k)})\big)\ \le\ \min(\nu(A),\nu(A^*))+\delta,
\]
from which we conclude the implications
\begin{align*}
\inf_{k\in \Z}\min\!\big(\nu(A|_{\ell^{2}(J^n_k)}),\nu(A^*|_{\ell^{2}(J^n_k)})\big)<\eps
\quad&\Rightarrow\quad
\min(\nu(A),\nu(A^*))<\eps\\
&\Rightarrow\quad
\inf_{k\in \Z}\min\!\big(\nu(A|_{\ell^{2}(J^n_k)}),\nu(A^*|_{\ell^{2}(J^n_k)})\big)<\eps+\delta
\end{align*}
for all $\eps>0$, and consequently
\begin{equation}\label{eq:speps.sandwich}
\Gamma^n_\eps(A)\ \subset\ \speps A\ \subset\ \Gamma^n_{\eps+\delta}(A),
\end{equation}
where
\begin{equation}\label{eq:defGamma}
\Gamma^n_\eps(A)\ :=\ \bigcup_{k\in\Z}\left\{\lambda\in\C:\min\!\big(\nu((A-\lambda I)|_{\ell^{2}(J^n_k)}),\nu((A-\lambda I)^*|_{\ell^{2}(J^n_k)})\big)<\eps \right\}.
\end{equation}
Concerning the approximation step $(i)$ above, by \eqref{eq:nu.contr}, we have the implications
\[
\nu(B)<\eps-\eta
\qquad\Rightarrow\qquad
\nu(A)<\eps
\qquad\Rightarrow\qquad
\nu(B)<\eps+\eta,
\]
and the same holds for the adjoints. Subtracting $\lambda I$ from $A$ and $B$ and using \eqref{eq:spepsnu}, this shows that
\begin{equation}\label{eq:spepsAB}
\specn_{\eps-\eta}B\ \subset\ \speps A\ \subset\ \specn_{\eps+\eta}B,\qquad 0<\eta<\eps,
\end{equation}
so that upper and lower bounds on certain pseudospectra of $B$ yield bounds on $\speps A$. Moreover, the inclusions \eqref{eq:spepsAB} are as tight
as desired (in the Hausdorff distance) by sending $\eta\to 0$. 

{\bf Existing results.}
The probably most natural idea to approximate $\speps A$ is to look at the pseudospectra $\speps A_n$ of the finite sections $A_n=(a_{ij})_{i,j=-n}^n$
of $A=(a_{ij})_{i,j\in\Z}$ as $n\to\infty$. In some rare cases (Toeplitz operators \cite{ReichelTref,BoeSi2}, random Jacobi operators
\cite{CWLi2016:Coburn}), the sets $\speps A_n$ indeed converge to $\speps A$ w.r.t.~the Hausdorff distance -- but in general, the sequence
$\speps A_n$ does not converge at all; its cluster points usually contain $\speps A$ but also further points (see e.g. \cite{SeSi:FSBDO}, one speaks of spectral pollution).
Even in a simple selfadjoint example such as $A=\diag(...,B,B,B,...)$ with $B={0~1\choose 1~0}$, one has\footnote{The $\eps$-pseudospectra are
the $\eps$-neighbourhoods of the spectra in this selfadjoint example.} $\spec A=\{-1,1\}$, while $\spec A_n$ repeatedly switches between $\{-1,1\}$
and $\{-1,0,1\}$ as $n$ grows. As an alternative that is somewhere between spectra and pseudospectra, \cite{Hansen:nPseudo,Seidel:Neps} study
so-called $(N,\eps)$-pseudospectra, where $2^N$-th powers of the resolvent and of $1/\eps$ are compared to each other. In \cite{HansenSeidel}
the lower norms of rectangular submatrices are suggested for the approximation of the spectrum and the $(N,\eps)$-pseudospectrum. Needless to say,
there is a large amount of literature on the selfadjoint case (see e.g. \cite{Boulton,DaviesPlum} and the references therein).

One major problem in approximating $\Z$ by the intervals $\{-n,...,n\}$ is (besides the potential of spectral pollution) that generally, 
huge values of $n$ are required to capture spectral properties of $A$ properly. (Think of an infinite diagonal matrix with distinguished 
entries in very remote locations.) From a computational perspective, such huge sections $\{-n,...,n\}$ are too expensive. The approach of 
\cite{CW.Heng.ML:UpperBounds} (also see \S 3 and 4 in \cite{HengPhD}) -- that is very much in the spirit of Gershgorin and that we adopt here -- replaces $\{-n,...,n\}$ with 
$n\to\infty$ by the family of intervals $J_k^n=\{k+1,...,k+n\}$ for all $k\in\Z$ but with $n$ of moderate size. The price that is obviously 
paid is the infinite amount of positions $k$ that one has to look at, so that a certain structural simplicity of the infinite matrix is required to make the approach practically 
feasible. The other major plus of the \cite{CW.Heng.ML:UpperBounds} approach is that it comes with sharp and explicit bounds \eqref{eq:epsn} 
on the accuracy of the approximation \eqref{eq:speps.sandwich}, while working for the general non-normal case.

{\bf What is new here?}
The tridiagonal results and the ideas of transferring them to band-dominated operators via $(i)$ and $(ii)$ are from
\cite{CW.Heng.ML:UpperBounds}, therefore not new. But there are two degrees of freedom in the choice of the blocks in step $(ii)$: 
Firstly, the size of the blocks, say $b\in\N$, could be any number greater than or equal to the bandwidth $d\in\N$. Secondly, once this size $b$ 
is fixed, there are $b$ different choices for the position of the blocks inside the infinite matrix (see Figure \ref{fig:1.1}). 

We play with that second degree of freedom, arguing that there is usually no best choice (in terms of sharpness of \eqref{eq:speps.sandwich})
of block positioning, and instead we consider all $b$ possibilities, thereby improving sharpness of the bounds on $\speps A$. (We take the union 
of the $b$ different lower bounds and the intersection of the $b$ upper bounds.) Naively implemented, this increases the computational cost by
the factor $b$. However, we present an algorithm that compensates for this increase by reusing much of the effort that was put into the computation
of $\nu(A|_{\ell^2(J^n_k)})$ for the computation of $\nu(A|_{\ell^2(J^n_{k+1})})$. This is possible due to the large overlap between the two
matrices $A|_{\ell^2(J^n_k)}$ and $A|_{\ell^2(J^n_{k+1})}$. We cannot see a similar idea to work for the $b$-sized step from $\nu(A|_{\ell^2(J^n_k)})$
to $\nu(A|_{\ell^2(J^n_{k+b})})$ in the block matrix, though.

In a nutshell, the smallest singular value of $A|_{\ell^2(J^n_k)}$ coincides with that of the upper triangular matrix\footnote{For the computation of the smallest singular value of $R_k$, one can use an inverse Lanczos method.} $R_k$ from the factorization $A|_{\ell^2(J^n_k)}=Q_kR_k$ with a unitary $Q_k$ that results from a sequence of Givens rotations. The key idea is now to rearrange and reuse most of these Givens rotations 
for the next step when $k$ is replaced by $k+1$. With this algorithm, the complexity of the computation of $\nu(A|_{\ell^2(J^n_{k+1})})=\nu(R_{k+1})$ decreases from $\O(nd^2)$ to just $\O(nd)$, thereby compensating for the increase by a factor of $b\approx d$ that was mentioned above. The same recycling idea and the same complexity then also apply to the computation of $\nu(A|_{\ell^2(J^n_{k+2})}), \nu(A|_{\ell^2(J^n_{k+3})})$, etc.

{\bf Contents of the paper.}
In Section \ref{sec:red} we show the details of both reduction steps $(i)$ and $(ii)$. The heart of the paper is Section \ref{sec:algorithm}, where we present the algorithm for the computation of $\nu(A|_{\ell^2(J^n_{k+1})})$ from $\nu(A|_{\ell^2(J^n_k)})$ by appropriately reordering Givens rotations.
In Section \ref{sec:applications} we illustrate our results in two examples with non-trivial pseudospectra. Moreover, we compare the efficiency of our algorithm with the standard QR decomposition in each step.

\section{From band-dominated to tridiagonal operators} \label{sec:red}
Recall that we call an operator $A$ on $\ell^2$ band-dominated if it is the limit, in the operator norm, of a sequence of band operators (which are bounded operators with a banded matrix representation). Let us denote the sets of all band and all band-dominated operators on $\ell^2$ by $\BO$ and $\BDO$, respectively. We make use of the results from \cite{CW.Heng.ML:UpperBounds,HengPhD} for tridiagonal operators by two steps of reduction:

\subsection{Step $(i)$: From band-dominated to banded}\label{ssec:banddominatedtobanded}
Let $A\in\BDO$ and $\eta>0$ be given. There are different approaches of constructing a band operator $B$ with $\|A-B\|\le\eta$, leading to \eqref{eq:nu.contr} and \eqref{eq:spepsAB}:

{\bf Case 1.} If $A$ is in the so-called Wiener algebra, the problem is simple. To explain this, let $(b_{ij})_{i,j\in\Z}$ be the matrix representation of some $B\in\BO$ and let $d_k:=(b_{j+k,j})_{j\in\Z}$ be its $k$-th diagonal, where $k\in\Z$. Then
\[
B\ =\ \sum_{k\in\Z} M_{d_k}V_k,
\]
where $M_f$ refers to the operator on $\ell^2$ of entrywise multiplication with a sequence $f\in\ell^\infty$ and $V_k$ is the forward shift on $\ell^2$ by $k$ positions. (Note that the sum is actually finite, by $B\in\BO$.) It now follows that
\begin{equation} \label{eq:Wnorm}
\|B\|\ =\ \left\|\sum_{k\in\Z} M_{d_k}V_k\right\|\ \le\ \sum_{k\in\Z} \|M_{d_k}\|\|V_k\|\ =\ \sum_{k\in\Z} \|d_k\|_\infty\ =:\ \lW B\rW.
\end{equation}
The new expression $\lW\cdot\rW$ indeed defines a norm on $\BO$. The completion of $\BO$ with respect to $\lW\cdot\rW$ is the so-called {\sl Wiener algebra} $\W$. By \eqref{eq:Wnorm}, $\W$ is contained in the completion of $\BO$ w.r.t. $\|\cdot\|$, that is $\BDO$. Moreover, $(\W,\lW\cdot\rW)$ is a Banach algebra (see \S1.6.8 in \cite{Kurbatov} or \S3.7.3 in \cite{Li:Habil}).

So if $A\in\W\subset\BDO$ and $d_k$ refers to its $k$-th diagonal for all $k\in\Z$, then
\begin{equation}\label{eq:Wapprox}
B_n\ :=\ \sum_{k=-n}^n M_{d_k}V_k\ \in\BO
\end{equation}
is the desired approximation of $A$ if $n\in\N$ is chosen large enough for
\begin{equation}
\|A-B_n\|\ \le\ \lW A-B_n\rW\ =\ \sum_{|k|>n}\|d_k\|_\infty\ \le\ \eta.\label{eq:Werror}
\end{equation}
Such an $n$ exists since $\sum_{n\in\Z}\|d_k\|_\infty<\infty$, by $A\in\W$.

{\bf Case 2.} If $A\in\BDO\setminus\W$, the simple approach \eqref{eq:Wapprox} of restriction to a finite subset of diagonals need not lead to a sequence $B_n$ that converges to $A$ in the operator norm. A simple example is shown in Remark 1.40 of \cite{LiBook}. The example relies on the fact that, for a continuous $2\pi$-periodic function $f$ on $\R$, the partial sums of the Fourier series need not converge uniformly to $f$. This is repaired by looking at Fejer-Cesaro means instead, and the same trick works for the approximation of band-dominated operators:
\begin{equation}
C_n\ :=\ \frac{B_0+...+B_n}{n+1}\ =\ \sum_{k=-n}^n \left(1-\frac{|k|}{n+1}\right)M_{d_k}V_k\ \in\BO
\end{equation}
with $B_n$ from \eqref{eq:Wapprox} can be shown to converge to $A$ in the operator norm as $n\to\infty$, see e.g. the proof of the implication $(e)\Rightarrow(a)$ in Theorem 2.1.6 of \cite{RaRoSiBook}.

Another way to explicitly approximate $A\in\BDO$ by band operators is shown in (1.18) of \cite{SeidelPhD}.

\subsection{Step $(ii)$: From banded to tridiagonal}\label{ssec:bandedtotridiagonal}
Now we can assume $A\in\BO$. Let $d$ denote its bandwidth. The idea is captured by Figure \ref{fig:1.1} above: $A$ can be expressed as a block-tridiagonal matrix with block size $b\ge d$. Besides the choice of $b$, there is another degree of freedom in this identification. If the blocks are centered on the main diagonal, there are still $b$ different positions at which to start, see Figure \ref{fig:1.1}. 

Precisely, each block is of the form 
\begin{equation}\label{eq:blocks}
\left(
\begin{array}{ccc}
a_{i+1,j+1}&\cdots&a_{i+1,j+b}\\
\vdots& &\vdots\\
a_{i+b,j+1}&\cdots&a_{i+b,j+b}
\end{array}
\right)\in\C^{b\times b}
\quad\textrm{with}\quad
i,j\in c+b\Z:=\{c+bz:z\in\Z\},
\end{equation}
where $c\in\{0,...,b-1\}$ is this second degree of freedom. This leads to $b$ different ways (one for each choice of the offset $c$) of turning $A$ into a tridiagonal matrix.

For the moment, fix one choice of $c\in\{0,...,b-1\}$. To apply the results on the block-tridiagonal matrix behind $A$, we have to adjust the intervals $J^n_k:=\{k+1,...,k+n\}$ (of matrix columns under current investigation in \eqref{eq:nusqueezek}) with the blocks. Therefore, we restrict ourselves to positions $k\in c+b\Z$ and to interval lengths $n=Nb$, where $N\in\N$ is the number of blocks to be considered in $J^n_k$.

Now we slightly modify \eqref{eq:defGamma} to
\begin{equation}\label{eq:defGammaM}
\Gamma^{n,M}_\eps(A)\ :=\ \bigcup_{k\in M}\left\{\lambda\in\C:\min\!\big(\nu((A-\lambda I)|_{\ell^{2}(J^n_k)}),\nu((A-\lambda I)^*|_{\ell^{2}(J^n_k)})\big)<\eps \right\}
\end{equation}
for any set $M\subset\Z$, where our particular interest is in sets of the form $M=c+b\Z$. Assuming $b$ as given and fixed, we abbreviate $\Gamma^{n,c+b\Z}_\eps(A)=:\Gamma^{n,c}_\eps(A)$.

Because each offset $c\in\{0,...,b-1\}$ yields a tridiagonal representation of $A$, we get from \eqref{eq:speps.sandwich} that all inclusions
\begin{equation} \label{eq:Gammas}
\left.
\begin{array}{rcl}
\Gamma^{n,0}_\eps(A)\ \subset& \speps A& \subset\ \Gamma^{n,0}_{\eps+\delta}(A)\\
\Gamma^{n,1}_\eps(A)\ \subset& \speps A& \subset\ \Gamma^{n,1}_{\eps+\delta}(A)\\
&\vdots\\
\Gamma^{n,b-1}_\eps(A)\ \subset& \speps A& \subset\ \Gamma^{n,b-1}_{\eps+\delta}(A)
\end{array}
\qquad\right\}
\end{equation}
hold. Here, by evaluating \eqref{eq:epsn} for the block tridiagonal matrix,
\begin{equation}\label{eq:epsn_block}
\delta\ \le\ 2\left(\sup_{l\in\Z}\|A_{l+1,l}\|+\sup_{l\in\Z}\|A_{l-1,l}\|\right)\sin\frac\pi{2N+2}\ \in\ \O\left(\frac 1N\right)\ =\ \O\left(\frac 1n\right),
\end{equation}
where we denote the block \eqref{eq:blocks} by $A_{kl}$ if $i=c+bk$ and $j=c+bl$ with $k,l\in\Z$.

Taking unions on the left and intersections on the right of \eqref{eq:Gammas}, we conclude
\begin{equation}\label{eq:sandwich_inters}
\Gamma^{n,0}_\eps(A)\cup\cdots\cup\Gamma^{n,b-1}_\eps(A)\ \subset\ \speps A
\ \subset\ \Gamma^{n,0}_{\eps+\delta}(A)\cap\cdots\cap \Gamma^{n,b-1}_{\eps+\delta}(A).
\end{equation}
In examples one observes that the bound \eqref{eq:sandwich_inters} on $\speps A$ is sharper than any of \eqref{eq:Gammas}. 

\begin{example}
\label{ex:boundarysets}
We look at the following 2-periodic bi-infinite matrix with bandwidth $d=2$:
\[
A=\left(\begin{array}{c|cc|cc|cc}
\ddots&\ddots&\ddots&&\\
\hline
\smash\ddots&0&9&4&&\\
\smash\ddots&9&0&2&0\\
\hline
&0&2&0&9&4\\
&&0&9&0&2&\smash\ddots\\
\hline
&&&0&2&0&\smash\ddots\\[-2mm]
&&&&\ddots&\ddots&\ddots
\end{array}\right)
=
\left(\begin{array}{cc|cc|cc|c}
\ddots&\ddots&\ddots&&&\\
\smash\ddots&0&9&4&&\\
\hline
\smash\ddots&9&0&2&0&&\\
&0&2&0&9&4\\
\hline
&&0&9&0&2&\smash\ddots\\
&&&0&2&0&\smash\ddots\\
\hline
&&&&\ddots&\ddots&\ddots
\end{array}\right)
\]
The block size was chosen to be $b=d=2$, leading to the two different possibilities of block positioning ($c=0$ and $c=1$) shown above. Figure \ref{fig:forExmp2.1} below shows a plot of $\Gamma^{n,0}_\eps(A)$ and, for comparison, of $\Gamma^{n,1}_\eps(A)$, as well as $\Gamma_\epsilon^{n,0}(A)\cap \Gamma_\epsilon^{n,1}(A)$ for $n=6$. 
\end{example}

\begin{figure}[h]
\begin{center}
\includegraphics{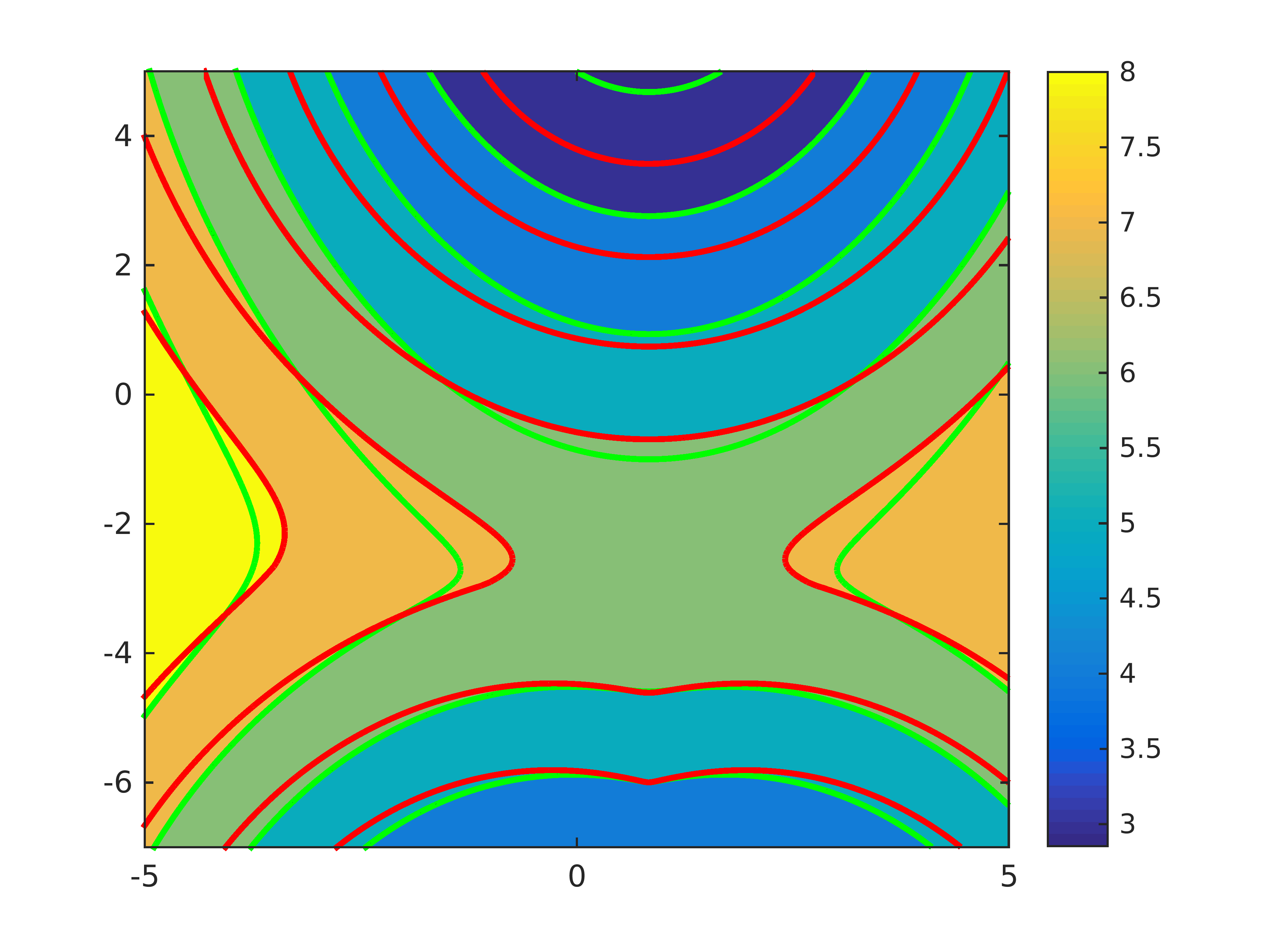}\\
\caption{Regarding Example \ref{ex:boundarysets}, we see the boundaries of the sets  $\Gamma^{n,0}_\eps(A)$ (dark/red line) and, for comparison, of $\Gamma^{n,1}_\eps(A)$ (light/green line), both for $n=6$ and $\eps=2,3,\ldots,8$. The colored areas denote $\Gamma_\epsilon^{n,0}(A)\cap \Gamma_\epsilon^{n,1}(A)$.}
\label{fig:forExmp2.1}
\end{center}
\end{figure}
This is why we suggest to look at all (instead of just one) of the inclusions \eqref{eq:Gammas}. Of course this improvement in quality of the bound on $\speps A$ increases the numerical costs by a factor of $b$. The next section shows how to compensate for that.


\section{The Algorithm} \label{sec:algorithm}
To simplify notation abbreviate, for $k\in\Z,n\in\N$ and $\lambda\in\C$,
\begin{align*}
 \begin{array}{cccc}A^k_\lambda := (A-\lambda I)|_{\ell^2(J^n_k)} :& \ell^2(J^n_k)&\rightarrow& \ell^2(J_{k-d}^{n+2d})\\&\cong&&\cong\\&\C^n&&\C^{n+2d}\end{array}
\end{align*}
and treat $A^k_\lambda$ as a finite rectangular matrix. We define $\overline{A}^k_\lambda:=(A-\lambda I)^*|_{\ell^2(J^n_k)}$ analogously.\\
As has been described in the previous section, we need to approximate $\nu(A^k_\lambda)$ and $\nu(\overline{A}^k_\lambda)$ for different values $\lambda\in\C$ and multiple consecutive values of $k$. This can be done by computing the smallest singular values $\sigma_n(A^k_\lambda)$ and $\sigma_n(\overline{A}^k_\lambda)$ which is strongly related to pseudospectra of rectangular matrices (\cite{TrefRectMatr}) and similar computational problems arise.\\
If the considered matrices $A^k_\lambda$ were square, we could compute the Schur decomposition of $A^k_0$ -- thus transforming $A^k_0$ into upper rectangular form -- while preserving the shift by $\lambda$. Afterwards we could compute $\sigma_n(A^k_\lambda)$ for multiple values of $\lambda\in\C$ using a bidiagonalization method \cite{nelabosnerPhd} on $(A_\lambda^k)^{-1}$. In the rectangular case though, no shift-preserving method to reduce $A_0^k$ to a simple form appears to be known and an inverse iteration is more difficult to implement for rectangular matrices.\\ 
We can however use the fact, that for each $\lambda\in\C$ we have two sequences $(A^k_\lambda)_k$ and $(\overline{A}^k_\lambda)_k$ each of which contains large overlaps between consecutive matrices. We will introduce an algorithm that takes advantage of this property.\\
We fix $\lambda\in\C$ and $n\in\N$ and abbreviate $A^{k}:=A^k_\lambda\in\C^{(n+2d)\times n}$ for $k\in\Z$.\\
Let $A^{k_0+1},A^{k_0+2},\ldots,A^{k_0+k_{\max}}$ be a finite sequence of matrices given by \eqref{eq:defGammaM}. W.l.o.g. we consider $k_0=0$. We can describe the overlapping property of these matrices by
 \begin{equation}
  A^{k}_{i,j}=A^{k+1}_{i-1,j-1},\; \text{for all } \left\{\begin{array}{l}1\leq k\leq k_{\max}-1\\ 2\leq i\leq n+2d=:m\\ 2\leq j\leq n.\end{array}\right. \label{eq:Ankshift}
 \end{equation}
Since $\nu(A^{k})=\sigma_n(A^{k})$, we are interested in computing the set
 \begin{align*}
 \{\sigma_n(A^{k})\}_{1\leq k\leq k_{\max}},
\end{align*}
where $\sigma_n$ denotes the smallest singular value, which can be approximated using a QR decomposition 
\begin{align}
Q^kA^{k}=R^{k}=\left(\begin{array}{c}{\tilde R}^k\\ \zero\end{array}\right),\text{ with }\tilde{R}^k\in\C^{n\times n}\text{ upper triangular}, Q^k\in\C^{m\times m}\text{ unitary}\label{eq:qrdec}
\end{align}
and applying an inverse Golub-Kahan-Lanczos-Bidiagonalization method (\cite{TemplAlgEigenv,MatrixComp83}), from now on abbreviated as GKLB method, to $\tilde R^k$ (i.e. applying the GKLB method to ${(\tilde R^k)}^{-1}$). Since this is a unitary transformation, the singular values of $A^k$ and $\tilde R^k$ are the same.
The inverse GKLB method requires solving two linear systems of equations in each iteration which can be achieved using backward-substitution, since $\tilde R^k$ is upper triangular.\\
Note that unlike convention we write $Q^kA^k=R^k$ instead of $(Q^k)^HA^k=R^k$ to simplify notation.
It is possible to compute a QR decomposition such that the banded structure of $A^k$ is preserved in $\tilde{R}^k$, i.e. $\tilde R^k$ has at most $2d+1$ consecutive non-zero diagonals. Therefore solving a linear system of equations involving $\tilde R^k$ requires only $\O(nd)$ flops. The QR decomposition \eqref{eq:qrdec} itself however requires $\O(nd^2)$ operations and is therefore the bottleneck of the algorithm for large $d$.\\
This bottleneck is addressed in the QH-shift-algorithm which we will develop in this section. The idea of the algorithm is to use Givens rotations to compute the factorization $Q^1A^{1}=H^1$, where $H^1\in\C^{m\times n}$ is an upper Hessenberg-matrix\footnote{We say a matrix $H\in\C^{m\times n}$ is an upper Hessenberg-matrix if $H_{i,j}=0$ for all $i>j+1$} with $2d+1$ consecutive non-zero diagonals, and then reuse these rotations to factorize $A^{2},A^{3},\ldots$ the same way.\\
Having factorized $A^{1},A^{2},\ldots$ into Hessenberg form using unitary transformations, we only need to apply $n$ additional Givens rotations to each matrix to arrive at the QR decomposition \eqref{eq:qrdec}. The total effort for each QR decomposition of $A^{2}, A^{3},\ldots$ is only $\O(nd)$ instead of $\O(nd^2)$.

{\bf Preliminaries.}
We will only use Givens rotations acting on consecutive rows and define a rotation on the $i$th and $(i+1)$st row by the mapping
\begin{align*}
\begin{array}{llll}
G_{i}:&\overline\D\times\overline\D&\rightarrow&\C^{m\times m}\\
&(c,s)&\mapsto&G_{i}(c,s).
\end{array}
\end{align*}
and
\begin{align*}
G_i(c,s)=\bordermatrix{
&   &  &i       &i+1    &&    \cr
&  1   & \cdots &    0   &     0   & \cdots &    0   \cr
&\vdots & \ddots & \vdots &         \vdots &        & \vdots \cr
i&0   & \cdots &    c   &     s   & \cdots &    0   \cr
i+1&0   & \cdots &   -\overline{s}    &    \overline{c}   & \cdots &    0   \cr
&\vdots &        & \vdots &         \vdots & \ddots & \vdots \cr
&0   & \cdots &    0   &     0   & \cdots &    1   
}
\end{align*}
where $\overline{\D}:=\{z\in\C:\,\abs{z}\leq 1\}$ is the closed complex unit disc. Details on the choice of $c,s$ can be found in standard literature \cite{MatrixComp83, MatrixComp08}.
To simplify notation we will, in most cases, write $G_i\equiv G_i(c,s)$, if the choice of $c,s$ is clear from the context. This naturally leads to the problem of possibly having multiple rotations on the same row, each having different entries $c,s$ and we hope that it will be clear from the context that these Givens rotations are not the same.\\
In the interest of readability we will mainly use the arrow-notation introduced by Raf Vandebril et al. in \cite{MatrixComp08,RotFigPackage}:
\begin{align}
 \begin{array}{c@{\hspace{1mm}}|c@{\hspace{1mm}}c@{\hspace{1mm}}c@{\hspace{1mm}}c@{\hspace{1mm}}}
1&&&&\Rc      \\[-0.05cm]
2&&&\Rc&\rc   \\[-0.05cm]  
3&&\Rc&\rc&      \\[-0.05cm]
4&\Rc&\rc&& \\[-0.05cm]
5&\rc&&& \\[-0.05cm]\cline{1-5}
&4&3&2&1
\end{array}\label{eq:arrowIntr}
\end{align}
The arrows in \eqref{eq:arrowIntr} each depict a Givens rotation operation, acting on the two rows in which the arrow is drawn (see axis of ordinates). The order of application of these rotations is described in the abscissa, i.e. from right to left, so that \eqref{eq:arrowIntr} represents the product $G_4G_3G_2G_1$.
It is important to note the order of application of the Givens rotations, since they do not commute in general unless they act on disjoint couples of rows:
\begin{align}
\begin{array}{c@{\hspace{1mm}}c@{\hspace{1mm}}}
\Rc&      \\[-0.05cm]
\rc&   \\[-0.05cm]  
&\Rc      \\[-0.05cm]
&\rc
\end{array}
\left[\begin{array}{c@{\hspace{1mm}}c@{\hspace{1mm}}}
\times & \times \\[-0.05cm]
\times & \times\\[-0.05cm]
\times & \times\\[-0.05cm]
\times & \times
\end{array}\right]
=
\begin{array}{c@{\hspace{1mm}}c@{\hspace{1mm}}}
&\Rc      \\[-0.05cm]
&\rc   \\[-0.05cm]  
\Rc&      \\[-0.05cm]
\rc&
\end{array}
\left[\begin{array}{c@{\hspace{1mm}}c@{\hspace{1mm}}}
\times & \times \\[-0.05cm]
\times & \times\\[-0.05cm]
\times & \times\\[-0.05cm]
\times & \times
\end{array}\right],\text{ but }
\begin{array}{c@{\hspace{1mm}}c@{\hspace{1mm}}}
\Rc&      \\[-0.05cm]
\rc&\Rc   \\[-0.05cm]  
&\rc      \\[-0.05cm]
&
\end{array}
\left[\begin{array}{c@{\hspace{1mm}}c@{\hspace{1mm}}}
\times & \times \\[-0.05cm]
\times & \times\\[-0.05cm]
\times & \times\\[-0.05cm]
\times & \times
\end{array}\right]
\neq
\begin{array}{c@{\hspace{1mm}}c@{\hspace{1mm}}}
&\Rc      \\[-0.05cm]
\Rc&\rc   \\[-0.05cm]  
\rc&      \\[-0.05cm]
&
\end{array}
\left[\begin{array}{c@{\hspace{1mm}}c@{\hspace{1mm}}}
\times & \times \\[-0.05cm]
\times & \times\\[-0.05cm]
\times & \times\\[-0.05cm]
\times & \times
\end{array}\right]\label{eq:givenscommute}
\end{align}
We say that a product $G_{i_1},G_{i_2},\ldots,G_{i_l}$ is a {\sl descending}, respectively {\sl ascending}, {\sl sequence of Givens rotations} of {\sl length} $l$, if $i_{p+1}=i_p-1$, respectively $i_{p+1}=i_p+1$, for $p=1,\ldots,l-1$. \eqref{eq:arrowIntr} is an example of a descending sequence of length $4$.
\pagebreak  

\begin{example}
The following Givens rotations can be written as a product of $3$ descending sequences of length $4$ or as a product of $4$ ascending sequences of length $3$.
\begin{align*}
(G_4 G_3 G_2 G_1)(G_5 G_4 G_3 G_2)(G_6 G_5 G_4 G_3)=&
\begin{array}{c@{\hspace{1mm}}c@{\hspace{1mm}}c@{\hspace{1mm}}c@{\hspace{1mm}};{2pt/2pt}c@{\hspace{1mm}}c@{\hspace{1mm}}c@{\hspace{1mm}}c@{\hspace{1mm}};{2pt/2pt}c@{\hspace{1mm}}c@{\hspace{1mm}}
  c@{\hspace{1mm}}c@{\hspace{1mm}}}
   &   &   &\Rc&   &   &   &   &   &   &   &   \\[-0.05cm]
   &   &\Rc&\rc&   &   &   &\Rc&   &   &   &   \\[-0.05cm] 
   &\Rc&\rc&   &   &   &\Rc&\rc&   &   &   &\Rc\\[-0.05cm]
\Rc&\rc&   &   &   &\Rc&\rc&   &   &   &\Rc&\rc\\[-0.05cm]
\rc&   &   &   &\Rc&\rc&   &   &   &\Rc&\rc&   \\[-0.05cm]
   &   &   &   &\rc&   &   &   &\Rc&\rc&   &   \\[-0.05cm]
   &   &   &   &   &   &   &   &\rc&   &   &   
\end{array}=
\underbrace{\begin{array}{c@{\hspace{1mm}}c@{\hspace{1mm}}c@{\hspace{1mm}}c@{\hspace{1mm}}c@{\hspace{1mm}}c@{\hspace{1mm}}
  c@{\hspace{1mm}}c@{\hspace{1mm}}}
   &   &   &   &   &\Rc&   &   \\[-0.05cm]
   &   &   &   &\Rc&\rc&\Rc&   \\[-0.05cm] 
   &   &   &\Rc&\rc&\Rc&\rc&\Rc\\[-0.05cm]
   &   &\Rc&\rc&\Rc&\rc&\Rc&\rc\\[-0.05cm]
   &   &\rc&\Rc&\rc&\Rc&\rc&   \\[-0.05cm]
   &   &   &\rc&\Rc&\rc&   &   \\[-0.05cm]
   &   &   &   &\rc&   &   &   
\end{array}}_{(*)}\\
=\begin{array}{c@{\hspace{1mm}}c@{\hspace{1mm}}c@{\hspace{1mm}};{2pt/2pt}c@{\hspace{1mm}}c@{\hspace{1mm}}c@{\hspace{1mm}};{2pt/2pt}c@{\hspace{1mm}}c@{\hspace{1mm}}c@{\hspace{1mm}};{2pt/2pt}c@{\hspace{1mm}}
  c@{\hspace{1mm}}c@{\hspace{1mm}}}
   &   &   &   &   &   &   &   &   &\Rc&   &   \\[-0.05cm]
   &   &   &   &   &   &\Rc&   &   &\rc&\Rc&   \\[-0.05cm] 
   &   &   &\Rc&   &   &\rc&\Rc&   &   &\rc&\Rc\\[-0.05cm]
\Rc&   &   &\rc&\Rc&   &   &\rc&\Rc&   &   &\rc\\[-0.05cm]
\rc&\Rc&   &   &\rc&\Rc&   &   &\rc&   &   &   \\[-0.05cm]
   &\rc&\Rc&   &   &\rc&   &   &   &   &   &   \\[-0.05cm]
   &   &\rc&   &   &   &   &   &   &   &   &   
\end{array}=&
(G_4 G_5 G_6)(G_3 G_4 G_5)(G_2 G_3 G_4)(G_1 G_2 G_3)
\end{align*}
Note that the rotations can be written in this compact form since the order of rotations that are in the same column of $(*)$ is irrelevant by \eqref{eq:givenscommute}.
\end{example}\\
We illustrate the algorithm using example matrices with parameters $n=7$, $d=2$, i.e. matrices from $\C^{11\times 7}$, which is just large enough to visualize the procedure. Most transformations which are applied in this algorithm are easy to see but technical to prove, and most proofs have therefore been omitted.\\
The algorithm is divided into several steps, each representing one matrix from the sequence $\{A^{k}\}_{1\leq k\leq k_{\max}}$.

\textbf{Step 1:}
We start the first step by computing a QH factorization of $A^{1}$ using consecutive Givens rotations. The number of subdiagonals\footnote{Where we define the main diagonal as the set $\{A^k_{i,i}:\,1\leq i\leq n\}$} is $2d$, we therefore require $2d-1$ sequences of Givens rotations to achieve Hessenberg form:
\begin{align}
\begin{array}{c@{\hspace{1mm}}c@{\hspace{1mm}}c@{\hspace{1mm}}
  c@{\hspace{1mm}}c@{\hspace{1mm}}c@{\hspace{1mm}}c@{\hspace{1mm}}
  c@{\hspace{1mm}}c@{\hspace{1mm}}c@{\hspace{1mm}}c@{\hspace{1mm}}}
&   &   &   &   &   &   &   &   &   &   \\[-0.05cm] 
&   &   &   &   &   &   &   &   &   &   \\[-0.05cm]
&   &   &   &   &   &   &   &   &   &   \\[-0.05cm]
&   &   &   &   &   &   &   &   &   &\Rc\\[-0.05cm]
&   &   &   &   &   &   &   &   &\Rc&\rc\\[-0.05cm] 
&   &   &   &   &   &   &   &\Rc&\rc&   \\[-0.05cm]
&   &   &   &   &   &   &\Rc&\rc&   &   \\[-0.05cm] 
&   &   &   &   &   &\Rc&\rc&   &   &   \\[-0.05cm]
&   &   &   &   &\Rc&\rc&   &   &   &   \\[-0.05cm] 
&   &   &   &\Rc&\rc&   &   &   &   &   \\[-0.05cm] 
&   &   &   &\rc&   &   &   &   &   &        
\end{array}
\left[\begin{array}{c@{\hspace{1mm}}c@{\hspace{1mm}}
                  c@{\hspace{1mm}}c@{\hspace{1mm}}c@{\hspace{1mm}}c@{\hspace{1mm}}c@{\hspace{1mm}}}
\times & 0& 0& 0&0& 0& 0 \\[-0.05cm]
\times & \times& 0& 0&0& 0& 0 \\[-0.05cm]
\times & \times& \times& 0&0& 0& 0 \\[-0.05cm]
\times & \times& \times& \times&0& 0& 0 \\[-0.05cm]
\times & \times& \times& \times&\times& 0& 0 \\[-0.05cm]
0&\times & \times& \times& \times&\times& 0 \\[-0.05cm]
0&0&\times & \times& \times& \times&\times\\[-0.05cm]
0&0&0 & \times& \times& \times&\times\\[-0.05cm]
0&0&0& 0& \times& \times&\times\\[-0.05cm]
0&0&0 & 0&0& \times&\times\\[-0.05cm]
0&0&0 & 0&0&0&\times
\end{array}\right]
\quad=&
\quad
\left[\begin{array}{c@{\hspace{1mm}}c@{\hspace{1mm}}
                  c@{\hspace{1mm}}c@{\hspace{1mm}}c@{\hspace{1mm}}c@{\hspace{1mm}}c@{\hspace{1mm}}}
\times & 0& 0& 0&0& 0& 0 \\[-0.05cm]
\times & \times& 0& 0&0& 0& 0 \\[-0.05cm]
\times & \times& \times& 0&0& 0& 0 \\[-0.05cm]
\times & \times& \times& \times&\times& 0& 0 \\[-0.05cm]
0 & \times& \times& \times&\times& \times& 0 \\[-0.05cm]
0&0 & \times& \times& \times&\times& \times \\[-0.05cm]
0&0&0 & \times& \times& \times&\times\\[-0.05cm]
0&0&0 & 0& \times& \times&\times\\[-0.05cm]
0&0&0& 0& 0& \times&\times\\[-0.05cm]
0&0&0 & 0&0& 0&\times\\[-0.05cm]
0&0&0 & 0&0&0&0
\end{array}\right]\nonumber\\
\Downarrow&\nonumber\\
 Q^1 A^{1} =
\begin{array}{c@{\hspace{1mm}}c@{\hspace{1mm}}c@{\hspace{1mm}}
  c@{\hspace{1mm}}c@{\hspace{1mm}}c@{\hspace{1mm}}c@{\hspace{1mm}}
  c@{\hspace{1mm}}c@{\hspace{1mm}}c@{\hspace{1mm}}c@{\hspace{1mm}}}
&   &   &   &   &   &   &   &   &   &   \\[-0.05cm] 
&   &   &   &   &   &   &   &\Rc&   &   \\[-0.05cm]
&   &   &   &   &   &   &\Rc&\rc&\Rc&   \\[-0.05cm]
&   &   &   &   &   &\Rc&\rc&\Rc&\rc&\Rc\\[-0.05cm]
&   &   &   &   &\Rc&\rc&\Rc&\rc&\Rc&\rc\\[-0.05cm] 
&   &   &   &\Rc&\rc&\Rc&\rc&\Rc&\rc&   \\[-0.05cm]
&   &   &\Rc&\rc&\Rc&\rc&\Rc&\rc&   &   \\[-0.05cm] 
&   &\Rc&\rc&\Rc&\rc&\Rc&\rc&   &   &   \\[-0.05cm]
&   &\rc&\Rc&\rc&\Rc&\rc&   &   &   &   \\[-0.05cm] 
&   &   &\rc&\Rc&\rc&   &   &   &   &   \\[-0.05cm] 
&   &   &   &\rc&   &   &   &   &   &        
\end{array}
\left[\begin{array}{c@{\hspace{1mm}}c@{\hspace{1mm}}
                  c@{\hspace{1mm}}c@{\hspace{1mm}}c@{\hspace{1mm}}c@{\hspace{1mm}}c@{\hspace{1mm}}}
\times & 0& 0& 0&0& 0& 0 \\[-0.05cm]
\times & \times& 0& 0&0& 0& 0 \\[-0.05cm]
\times & \times& \times& 0&0& 0& 0 \\[-0.05cm]
\times & \times& \times& \times&0& 0& 0 \\[-0.05cm]
\times & \times& \times& \times&\times& 0& 0 \\[-0.05cm]
0&\times & \times& \times& \times&\times& 0 \\[-0.05cm]
0&0&\times & \times& \times& \times&\times\\[-0.05cm]
0&0&0 & \times& \times& \times&\times\\[-0.05cm]
0&0&0& 0& \times& \times&\times\\[-0.05cm]
0&0&0 & 0&0& \times&\times\\[-0.05cm]
0&0&0 & 0&0&0&\times
\end{array}\right]
\quad=&
\quad
\left[
\begin{array}{c@{\hspace{1mm}}c@{\hspace{1mm}}c@{\hspace{1mm}}c@{\hspace{1mm}}c@{\hspace{1mm}}c@{\hspace{1mm}}c@{\hspace{1mm}}}
\times & 0      & 0      & 0      & 0     & 0     & 0      \\[-0.05cm]
\times & \times & \times & \times & \times& 0     & 0      \\[-0.05cm]
0      & \times & \times & \times & \times& \times& 0      \\[-0.05cm]
0      & 0      & \times & \times & \times& \times& \times \\[-0.05cm]
0      & 0      & 0      & \times & \times& \times& \times \\[-0.05cm]
0      & 0      & 0      & 0      & \times& \times& \times \\[-0.05cm]
0      & 0      & 0      & 0      & 0     & \times& \times \\[-0.05cm]
0      & 0      & 0      & 0      & 0     & 0     & \times \\[-0.05cm]
0      & 0      & 0      & 0      & 0     & 0     & 0      \\[-0.05cm]
0      & 0      & 0      & 0      & 0     & 0     & 0      \\[-0.05cm]
0      & 0      & 0      & 0      & 0     & 0     & 0      
\end{array}\right]=H^1.\label{eq:qhshiftstep1}
\end{align}
Notice that, since the right hand side is of Hessenberg form, no rotations acting on the first row are required and the first row of $Q^1$ is the unit vector $e_1^T$. Since $Q^1$ is unitary it has the form
\begin{align}
Q^1=\left(\begin{array}{cc}1&\zero\\\zero&\tilde{Q}^1\end{array}\right),\;\;\tilde{Q}^1\in \C^{m-1\times m-1}\label{eq:q1structure}
\end{align}
The computational effort of this step consists of the computation and application of $n(2d-1)$ Givens rotations. Because of the band structure the number of flops required is $\O(nd^2)$.\\
We can now easily compute a QR decomposition by applying $n$ additional Givens rotations to the matrix $H^1$. This is done in $\O(nd)$ flops.

\textbf{Step 2}:
Since $A^1$ and $A^2$ overlap in all but one row and column each, we can derive $A^2$ from $A^1$ by cutting off the first row and column, shifting all values by one entry to the top left (as in \eqref{eq:Ankshift}) and add a new row and column at the end. More precisely, let
\begin{align*}
 C_p:=\left(\begin{array}{cc}\zero&I_{p-1}\\1&\zero\end{array}\right): \left(\begin{array}{c} x_1\\\vdots\\x_{p-1}\\x_p\end{array}\right)\mapsto \left(\begin{array}{c} x_2\\\vdots\\x_{p}\\x_1\end{array}\right)
\end{align*}
denote the circulant backward shift of size $p$. Then $\hat{A}^2:=C_mA^1C_n^{-1}$ differs from $A^2$ only in the last column (the first $n-1$ entries in the last row are zero in both matrices) and satisfies \eqref{eq:Ankshift}. We illustrate this step $A^1\rightarrow \hat{A}^2\rightarrow A^2$ as follows, where $-$ and $+$ denote the entries lost and gained respectively:
\begin{align*}
\underbrace{
\left[\begin{array}{c@{\hspace{1mm}}c@{\hspace{1mm}}
                  c@{\hspace{1mm}}c@{\hspace{1mm}}c@{\hspace{1mm}}c@{\hspace{1mm}}c@{\hspace{1mm}}}
- & 0& 0& 0&0& 0& 0 \\[-0.05cm]
- & \times& 0& 0&0& 0& 0 \\[-0.05cm]
- & \times& \times& 0&0& 0& 0 \\[-0.05cm]
- & \times& \times& \times&0& 0& 0 \\[-0.05cm]
- & \times& \times& \times&\times& 0& 0 \\[-0.05cm]
0&\times & \times& \times& \times&\times& 0 \\[-0.05cm]
0&0&\times & \times& \times& \times&\times\\[-0.05cm]
0&0&0 & \times& \times& \times&\times\\[-0.05cm]
0&0&0& 0& \times& \times&\times\\[-0.05cm]
0&0&0 & 0&0& \times&\times\\[-0.05cm]
0&0&0 & 0&0&0&\times
\end{array}\right]}_{=A^1}\rightarrow
\underbrace{
\left[\begin{array}{c@{\hspace{1mm}}c@{\hspace{1mm}}
                  c@{\hspace{1mm}}c@{\hspace{1mm}}c@{\hspace{1mm}}c@{\hspace{1mm}}c@{\hspace{1mm}}}
\times & 0& 0& 0&0& 0& - \\[-0.05cm]
\times & \times& 0& 0&0& 0& - \\[-0.05cm]
\times & \times& \times& 0&0& 0& - \\[-0.05cm]
\times & \times& \times& \times&0& 0& - \\[-0.05cm]
\times & \times& \times& \times&\times& 0& 0 \\[-0.05cm]
0&\times & \times& \times& \times&\times& 0 \\[-0.05cm]
0&0&\times & \times& \times& \times&0\\[-0.05cm]
0&0&0 & \times& \times& \times&0\\[-0.05cm]
0&0&0& 0& \times& \times&0\\[-0.05cm]
0&0&0 & 0&0& \times&0\\[-0.05cm]
0&0&0 & 0&0&0&-
\end{array}\right]}_{=\hat{A}^2}\rightarrow
\underbrace{\left[\begin{array}{c@{\hspace{1mm}}c@{\hspace{1mm}}
                  c@{\hspace{1mm}}c@{\hspace{1mm}}c@{\hspace{1mm}}c@{\hspace{1mm}}c@{\hspace{1mm}}}
\times & 0& 0& 0&0& 0& 0 \\[-0.05cm]
\times & \times& 0& 0&0& 0& 0 \\[-0.05cm]
\times & \times& \times& 0&0& 0& 0 \\[-0.05cm]
\times & \times& \times& \times&0& 0& 0 \\[-0.05cm]
\times & \times& \times& \times&\times& 0& 0 \\[-0.05cm]
0&\times & \times& \times& \times&\times& 0 \\[-0.05cm]
0&0&\times & \times& \times& \times&+\\[-0.05cm]
0&0&0 & \times& \times& \times&+\\[-0.05cm]
0&0&0& 0& \times& \times&+\\[-0.05cm]
0&0&0 & 0&0& \times&+\\[-0.05cm]
0&0&0 & 0&0&0&+
\end{array}\right]}_{=A^2}
\end{align*}
We apply the same transformation to the factorization $Q^1A^1=H^1$:
\begin{align}
 Q^1A^1=H^1\Rightarrow \underbrace{C_m Q^1C_m^{-1}}_{=:\hat{Q}^2}\underbrace{C_mA^1C_n^{-1}}_{=\hat{A}^2}=\underbrace{C_mH^1C_n^{-1}}_{=:\hat{H}^2}.\label{eq:qhshiftfact}
\end{align}
Notice that $\hat{Q}^2$ is again unitary and can be written as
\begin{align*}
 \hat{Q}^2=C_mQ^1C_m^{-1}=C_m\left(\begin{array}{cc} 1&\zero\\\zero&\tilde{Q}^1\end{array}\right)C_m^{-1}=\left(\begin{array}{cc} \tilde{Q}^1&\zero\\\zero&1\end{array}\right).
\end{align*}
The matrix $\hat{Q}^2$ consists of the same sequences of Givens rotations as before, where all Givens rotations have been shifted up by one row.
We write the factorization \eqref{eq:qhshiftfact} as
\begin{align}
 \hat{Q}^2\hat{A}^2=\left(\begin{array}{c|c} \tilde{Q}^1&\zero\\\hline\zero&1\end{array}\right)\cdot \left(\begin{array}{clc|c}|&&|&|\\ \hat{a}^2_1&\cdots&\hat{a}^2_{n-1}&\hat{a}^2_n\\|&&|&|\\\hline0&\cdots&0&\hat{a}^2_{n,n}\end{array}\right)=
\left(\begin{array}{clc|c}|&&|&|\\ \tilde{Q}^1\hat{a}^2_1&\cdots&\tilde{Q}^1\hat{a}^2_{n-1}&\tilde{Q}^1\hat{a}^2_n\\|&&|&|\\\hline0&\cdots&0&\hat{a}^2_{n,n}\end{array}\right)=\hat{H}^2,\label{eq:qhshiftfactbig}
\end{align}
where $\hat{a}^2_i$ denotes the $i$th column of $\hat{A}^2$ without the last row. Notice that, by \eqref{eq:qhshiftfact}, $\hat{H}^2$ is again of upper Hessenberg form everywhere except in the last column. We will now replace $\hat{A}^2$ with $A^2$ in \eqref{eq:qhshiftfact} and \eqref{eq:qhshiftfactbig} which leads to $\hat{Q}^2A^2=:\tilde{H}^2$. As can be seen in \eqref{eq:qhshiftfactbig}, the matrices $\hat{H}^2$ and $\tilde{H}^2$ only differ in the last column because $\hat{A}^2$ and $A^2$ only differ in the last column. These new values have to be computed by applying $\hat{Q}^2$ to the last column of $A^2$. These are the only values which have to be calculated in this transformation and there is a fill-in of at most $2d-1$ non-zero values.\\
We illustrate this entire procedure as follows, where $+$ denotes the fill-in produced by applying $\hat{Q}^2$ to the last column of $A^2$:
\begin{align}
Q^1A^1=\begin{array}{c@{\hspace{1mm}}c@{\hspace{1mm}}c@{\hspace{1mm}}
  c@{\hspace{1mm}}c@{\hspace{1mm}}c@{\hspace{1mm}}c@{\hspace{1mm}}
  c@{\hspace{1mm}}c@{\hspace{1mm}}c@{\hspace{1mm}}c@{\hspace{1mm}}}
&   &   &   &   &   &   &   &   &   &   \\[-0.05cm] 
&   &   &   &   &   &   &   &\Rc&   &   \\[-0.05cm]
&   &   &   &   &   &   &\Rc&\rc&\Rc&   \\[-0.05cm]
&   &   &   &   &   &\Rc&\rc&\Rc&\rc&\Rc\\[-0.05cm]
&   &   &   &   &\Rc&\rc&\Rc&\rc&\Rc&\rc\\[-0.05cm] 
&   &   &   &\Rc&\rc&\Rc&\rc&\Rc&\rc&   \\[-0.05cm]
&   &   &\Rc&\rc&\Rc&\rc&\Rc&\rc&   &   \\[-0.05cm] 
&   &\Rc&\rc&\Rc&\rc&\Rc&\rc&   &   &   \\[-0.05cm]
&   &\rc&\Rc&\rc&\Rc&\rc&   &   &   &   \\[-0.05cm] 
&   &   &\rc&\Rc&\rc&   &   &   &   &   \\[-0.05cm] 
&   &   &   &\rc&   &   &   &   &   &        
\end{array}
\left[\begin{array}{c@{\hspace{1mm}}c@{\hspace{1mm}}
                  c@{\hspace{1mm}}c@{\hspace{1mm}}c@{\hspace{1mm}}c@{\hspace{1mm}}c@{\hspace{1mm}}}
\times & 0& 0& 0&0& 0& 0 \\[-0.05cm]
\times & \times& 0& 0&0& 0& 0 \\[-0.05cm]
\times & \times& \times& 0&0& 0& 0 \\[-0.05cm]
\times & \times& \times& \times&0& 0& 0 \\[-0.05cm]
\times & \times& \times& \times&\times& 0& 0 \\[-0.05cm]
0&\times & \times& \times& \times&\times& 0 \\[-0.05cm]
0&0&\times & \times& \times& \times&\times\\[-0.05cm]
0&0&0 & \times& \times& \times&\times\\[-0.05cm]
0&0&0& 0& \times& \times&\times\\[-0.05cm]
0&0&0 & 0&0& \times&\times\\[-0.05cm]
0&0&0 & 0&0&0&\times
\end{array}\right]
\quad=&
\quad
\left[
\begin{array}{c@{\hspace{1mm}}c@{\hspace{1mm}}c@{\hspace{1mm}}c@{\hspace{1mm}}c@{\hspace{1mm}}c@{\hspace{1mm}}c@{\hspace{1mm}}}
\times & 0      & 0      & 0      & 0     & 0     & 0      \\[-0.05cm]
\times & \times & \times & \times & \times& 0     & 0      \\[-0.05cm]
0      & \times & \times & \times & \times& \times& 0      \\[-0.05cm]
0      & 0      & \times & \times & \times& \times& \times \\[-0.05cm]
0      & 0      & 0      & \times & \times& \times& \times \\[-0.05cm]
0      & 0      & 0      & 0      & \times& \times& \times \\[-0.05cm]
0      & 0      & 0      & 0      & 0     & \times& \times \\[-0.05cm]
0      & 0      & 0      & 0      & 0     & 0     & \times \\[-0.05cm]
0      & 0      & 0      & 0      & 0     & 0     & 0      \\[-0.05cm]
0      & 0      & 0      & 0      & 0     & 0     & 0      \\[-0.05cm]
0      & 0      & 0      & 0      & 0     & 0     & 0      
\end{array}\right]=H^1\nonumber\\
\Downarrow&\nonumber\\
\hat{Q}^2A^2=\begin{array}{c@{\hspace{1mm}}c@{\hspace{1mm}}c@{\hspace{1mm}}
  c@{\hspace{1mm}}c@{\hspace{1mm}}c@{\hspace{1mm}}c@{\hspace{1mm}}
  c@{\hspace{1mm}}c@{\hspace{1mm}}c@{\hspace{1mm}}c@{\hspace{1mm}}}
&   &   &   &   &   &   &   &\Rr&   &   \\[-0.05cm]
&   &   &   &   &   &   &\Rr&\rr&\Rc&   \\[-0.05cm]
&   &   &   &   &   &\Rr&\rr&\Rc&\rc&\Rc\\[-0.05cm]
&   &   &   &   &\Rr&\rr&\Rc&\rc&\Rc&\rc\\[-0.05cm] 
&   &   &   &\Rr&\rr&\Rc&\rc&\Rc&\rc&   \\[-0.05cm]
&   &   &\Rr&\rr&\Rc&\rc&\Rc&\rc&   &   \\[-0.05cm] 
&   &\Rr&\rr&\Rc&\rc&\Rc&\rc&   &   &   \\[-0.05cm]
&   &\rr&\Rc&\rc&\Rc&\rc&   &   &   &   \\[-0.05cm] 
&   &   &\rc&\Rc&\rc&   &   &   &   &   \\[-0.05cm] 
&   &   &   &\rc&   &   &   &   &   &   \\[-0.05cm] 
&   &   &   &   &   &   &   &   &   & 
\end{array}
\left[\begin{array}{c@{\hspace{1mm}}c@{\hspace{1mm}}
                  c@{\hspace{1mm}}c@{\hspace{1mm}}c@{\hspace{1mm}}c@{\hspace{1mm}}c@{\hspace{1mm}}}
\times & 0& 0& 0&0& 0& 0 \\[-0.05cm]
\times & \times& 0& 0&0& 0& 0 \\[-0.05cm]
\times & \times& \times& 0&0& 0& 0 \\[-0.05cm]
\times & \times& \times& \times&0& 0& 0 \\[-0.05cm]
\times & \times& \times& \times&\times& 0& 0 \\[-0.05cm]
0&\times & \times& \times& \times&\times& 0 \\[-0.05cm]
0&0&\times & \times& \times& \times&\times\\[-0.05cm]
0&0&0 & \times& \times& \times&\times\\[-0.05cm]
0&0&0& 0& \times& \times&\times\\[-0.05cm]
0&0&0 & 0&0& \times&\times\\[-0.05cm]
0&0&0 & 0&0&0&\times
\end{array}\right]
\quad=&
\quad
\left[
\begin{array}{c@{\hspace{1mm}}c@{\hspace{1mm}}c@{\hspace{1mm}}c@{\hspace{1mm}}c@{\hspace{1mm}}c@{\hspace{1mm}}c@{\hspace{1mm}}}
\times & \times & \times & \times & 0& 0     & 0      \\[-0.05cm]
\times & \times & \times & \times & \times& 0& 0      \\[-0.05cm]
0      & \times & \times & \times & \times& \times& 0\\[-0.05cm]
0      & 0      & \times & \times & \times& \times& + \\[-0.05cm]
0      & 0      & 0      & \times & \times& \times& + \\[-0.05cm]
0      & 0      & 0      & 0      & \times& \times& + \\[-0.05cm]
0      & 0      & 0      & 0      & 0     & \times& \times \\[-0.05cm]
0      & 0      & 0      & 0      & 0     & 0     & \times \\[-0.05cm]
0      & 0      & 0      & 0      & 0     & 0     & \times \\[-0.05cm]
0      & 0      & 0      & 0      & 0     & 0     & \times \\[-0.05cm]
0      & 0      & 0      & 0      & 0     & 0     & \times 
\end{array}\right]=\tilde{H}^2\label{eq:qhshiftstep2}
\end{align} 
We anticipate that in the next step we would like to apply the same shift again. However, since $\hat{Q}^2$ acts on the first row, the requirement \eqref{eq:q1structure} does not hold. If we were to naively shift all values of $\hat{Q}^2$ again to the top left by one entry and add $e_m^T$ in the last row and column, the resulting matrix $\hat{Q}^3$ would not be unitary. Figuratively speaking we would cut one Givens-rotation in half, since there can be no rotation acting on the ``zero''th row.\footnote{It is of course possible to allow Givens rotations acting on non-consecutive rows. However these rotations are difficult to remove leading to an ever increasing number of rotations.}\\
Therefore we have to remove the Givens rotation acting on the first row in $\hat{Q}^2$ in the left-most descending sequence, which is marked as gray in \eqref{eq:qhshiftstep2}. This can be done by applying the inverse rotations. The rotations in this sequence do not commute, since they are ordered consecutively. Thus we remove the entire sequence and add it again only this time starting in the second and ending in the $(n-1)$st row. This again costs $\O(nd)$.\\
Starting with \eqref{eq:qhshiftstep2} we remove the left-most descending sequence, which results in a fill-in in the 2nd subdiagonal ($+$ signs) and can be illustrated as
\begin{align*}
\begin{array}{c@{\hspace{1mm}}c@{\hspace{1mm}}c@{\hspace{1mm}}c@{\hspace{1mm}}
  c@{\hspace{1mm}}c@{\hspace{1mm}}c@{\hspace{1mm}}c@{\hspace{1mm}}
  c@{\hspace{1mm}}c@{\hspace{1mm}}c@{\hspace{1mm}}c@{\hspace{1mm}}}
   &   &   &   &   &   &   &   &   &   &   &   \\[-0.05cm]
   &   &   &   &   &   &   &   &   &   &\Rc&   \\[-0.05cm]
   &   &   &   &   &   &   &   &   &\Rc&\rc&\Rc\\[-0.05cm]
   &   &   &   &   &   &   &   &\Rc&\rc&\Rc&\rc\\[-0.05cm] 
   &   &   &   &   &   &   &\Rc&\rc&\Rc&\rc&   \\[-0.05cm]
   &   &   &   &   &   &\Rc&\rc&\Rc&\rc&   &   \\[-0.05cm] 
   &   &   &   &   &\Rc&\rc&\Rc&\rc&   &   &   \\[-0.05cm]
   &   &   &   &\Rc&\rc&\Rc&\rc&   &   &   &   \\[-0.05cm] 
   &   &   &   &\rc&\Rc&\rc&   &   &   &   &   \\[-0.05cm] 
   &   &   &   &   &\rc&   &   &   &   &   &   \\[-0.05cm] 
   &   &   &   &   &   &   &   &   &   &   & 
\end{array}
\left[\begin{array}{c@{\hspace{1mm}}c@{\hspace{1mm}}
                  c@{\hspace{1mm}}c@{\hspace{1mm}}c@{\hspace{1mm}}c@{\hspace{1mm}}c@{\hspace{1mm}}}
\times & 0& 0& 0&0& 0& 0 \\[-0.05cm]
\times & \times& 0& 0&0& 0& 0 \\[-0.05cm]
\times & \times& \times& 0&0& 0& 0 \\[-0.05cm]
\times & \times& \times& \times&0& 0& 0 \\[-0.05cm]
\times & \times& \times& \times&\times& 0& 0 \\[-0.05cm]
0&\times & \times& \times& \times&\times& 0 \\[-0.05cm]
0&0&\times & \times& \times& \times&\times\\[-0.05cm]
0&0&0 & \times& \times& \times&\times\\[-0.05cm]
0&0&0& 0& \times& \times&\times\\[-0.05cm]
0&0&0 & 0&0& \times&\times\\[-0.05cm]
0&0&0 & 0&0&0&\times
\end{array}\right]
\quad=&
\quad
\left[
\begin{array}{c@{\hspace{1mm}}c@{\hspace{1mm}}c@{\hspace{1mm}}c@{\hspace{1mm}}c@{\hspace{1mm}}c@{\hspace{1mm}}c@{\hspace{1mm}}}
\times & 0      & 0      & 0      & 0& 0     & 0      \\[-0.05cm]
\times & \times & \times & \times & 0     & 0& 0      \\[-0.05cm]
+      & \times & \times & \times & \times& 0     & 0\\[-0.05cm]
0      & +      & \times & \times & \times& \times& 0      \\[-0.05cm]
0      & 0      & +      & \times & \times& \times& \times \\[-0.05cm]
0      & 0      & 0      & +      & \times& \times& \times \\[-0.05cm]
0      & 0      & 0      & 0      & +     & \times& \times \\[-0.05cm]
0      & 0      & 0      & 0      & 0     & +     & \times \\[-0.05cm]
0      & 0      & 0      & 0      & 0     & 0     & \times \\[-0.05cm]
0      & 0      & 0      & 0      & 0     & 0     & \times \\[-0.05cm]
0      & 0      & 0      & 0      & 0     & 0     & \times 
\end{array}\right].
\end{align*}
Now we remove the second subdiagonal on the right hand side by adding a descending sequence of Givens rotations from the left:
\begin{align}
\begin{array}{c@{\hspace{1mm}}c@{\hspace{1mm}}c@{\hspace{1mm}}
  c@{\hspace{1mm}}c@{\hspace{1mm}}c@{\hspace{1mm}}c@{\hspace{1mm}}
  c@{\hspace{1mm}}c@{\hspace{1mm}}c@{\hspace{1mm}}c@{\hspace{1mm}}}
&   &   &   &   &   &   &   &   &   &   \\[-0.05cm]
&   &   &   &   &   &   &\Rc&   &\Rc&   \\[-0.05cm]
&   &   &   &   &   &\Rc&\rc&\Rc&\rc&\Rc\\[-0.05cm]
&   &   &   &   &\Rc&\rc&\Rc&\rc&\Rc&\rc\\[-0.05cm] 
&   &   &   &\Rc&\rc&\Rc&\rc&\Rc&\rc&   \\[-0.05cm]
&   &   &\Rc&\rc&\Rc&\rc&\Rc&\rc&   &   \\[-0.05cm] 
&   &\Rc&\rc&\Rc&\rc&\Rc&\rc&   &   &   \\[-0.05cm]
&   &\rc&\Rc&\rc&\Rc&\rc&   &   &   &   \\[-0.05cm] 
&   &   &\rc&\Rc&\rc&   &   &   &   &   \\[-0.05cm] 
&   &   &   &\rc&   &   &   &   &   &   \\[-0.05cm] 
&   &   &   &   &   &   &   &   &   & 
\end{array}
\left[\begin{array}{c@{\hspace{1mm}}c@{\hspace{1mm}}
                  c@{\hspace{1mm}}c@{\hspace{1mm}}c@{\hspace{1mm}}c@{\hspace{1mm}}c@{\hspace{1mm}}}
\times & 0& 0& 0&0& 0& 0 \\[-0.05cm]
\times & \times& 0& 0&0& 0& 0 \\[-0.05cm]
\times & \times& \times& 0&0& 0& 0 \\[-0.05cm]
\times & \times& \times& \times&0& 0& 0 \\[-0.05cm]
\times & \times& \times& \times&\times& 0& 0 \\[-0.05cm]
0&\times & \times& \times& \times&\times& 0 \\[-0.05cm]
0&0&\times & \times& \times& \times&\times\\[-0.05cm]
0&0&0 & \times& \times& \times&\times\\[-0.05cm]
0&0&0& 0& \times& \times&\times\\[-0.05cm]
0&0&0 & 0&0& \times&\times\\[-0.05cm]
0&0&0 & 0&0&0&\times
\end{array}\right]
\quad=&
\quad
\left[
\begin{array}{c@{\hspace{1mm}}c@{\hspace{1mm}}c@{\hspace{1mm}}c@{\hspace{1mm}}c@{\hspace{1mm}}c@{\hspace{1mm}}c@{\hspace{1mm}}}
\times & 0      & 0      & 0      & 0     & 0     & 0      \\[-0.05cm]
\times & \times & \times & \times & \times& 0     & 0      \\[-0.05cm]
0      & \times & \times & \times & \times& \times& 0\\[-0.05cm]
0      & 0      & \times & \times & \times& \times& \times \\[-0.05cm]
0      & 0      & 0      & \times & \times& \times& \times \\[-0.05cm]
0      & 0      & 0      & 0      & \times& \times& \times \\[-0.05cm]
0      & 0      & 0      & 0      & 0     & \times& \times \\[-0.05cm]
0      & 0      & 0      & 0      & 0     & 0     & \times \\[-0.05cm]
0      & 0      & 0      & 0      & 0     & 0     & \times \\[-0.05cm]
0      & 0      & 0      & 0      & 0     & 0     & \times \\[-0.05cm]
0      & 0      & 0      & 0      & 0     & 0     & \times 
\end{array}\right].\label{eq:qhshiftreplacesequence}
\end{align}
Finally we can reduce the last column of the right hand side thus bringing it into Hessenberg-form. This can be achieved using an ascending sequence of Givens rotations of length $2d-1$, i.e. we add one Givens rotation to each existing sequence at the end:
\begin{align}
\underbrace{\begin{array}{c@{\hspace{1mm}}c@{\hspace{1mm}}c@{\hspace{1mm}}
  c@{\hspace{1mm}}c@{\hspace{1mm}}c@{\hspace{1mm}}c@{\hspace{1mm}}
  c@{\hspace{1mm}}c@{\hspace{1mm}}c@{\hspace{1mm}}c@{\hspace{1mm}}}
&   &   &   &   &   &   &   &   &   &   \\[-0.05cm]
&   &   &   &   &   &   &\Rc&   &\Rc&   \\[-0.05cm]
&   &   &   &   &   &\Rc&\rc&\Rc&\rc&\Rc\\[-0.05cm]
&   &   &   &   &\Rc&\rc&\Rc&\rc&\Rc&\rc\\[-0.05cm] 
&   &   &   &\Rc&\rc&\Rc&\rc&\Rc&\rc&   \\[-0.05cm]
&   &   &\Rc&\rc&\Rc&\rc&\Rc&\rc&   &   \\[-0.05cm] 
&   &\Rc&\rc&\Rc&\rc&\Rc&\rc&   &   &   \\[-0.05cm]
&\Rc&\rc&\Rc&\rc&\Rc&\rc&   &   &   &   \\[-0.05cm] 
&\rc&\Rc&\rc&\Rc&\rc&   &   &   &   &   \\[-0.05cm] 
&   &\rc&\Rc&\rc&   &   &   &   &   &   \\[-0.05cm] 
&   &   &\rc&   &   &   &   &   &   & 
\end{array}}_{=:Q^2}
\left[\begin{array}{c@{\hspace{1mm}}c@{\hspace{1mm}}
                  c@{\hspace{1mm}}c@{\hspace{1mm}}c@{\hspace{1mm}}c@{\hspace{1mm}}c@{\hspace{1mm}}}
\times & 0& 0& 0&0& 0& 0 \\[-0.05cm]
\times & \times& 0& 0&0& 0& 0 \\[-0.05cm]
\times & \times& \times& 0&0& 0& 0 \\[-0.05cm]
\times & \times& \times& \times&0& 0& 0 \\[-0.05cm]
\times & \times& \times& \times&\times& 0& 0 \\[-0.05cm]
0&\times & \times& \times& \times&\times& 0 \\[-0.05cm]
0&0&\times & \times& \times& \times&\times\\[-0.05cm]
0&0&0 & \times& \times& \times&\times\\[-0.05cm]
0&0&0& 0& \times& \times&\times\\[-0.05cm]
0&0&0 & 0&0& \times&\times\\[-0.05cm]
0&0&0 & 0&0&0&\times
\end{array}\right]
\quad=
\quad
\underbrace{\left[
\begin{array}{c@{\hspace{1mm}}c@{\hspace{1mm}}c@{\hspace{1mm}}c@{\hspace{1mm}}c@{\hspace{1mm}}c@{\hspace{1mm}}c@{\hspace{1mm}}}
\times & 0      & 0      & 0      & 0     & 0     & 0      \\[-0.05cm]
\times & \times & \times & \times & \times& 0     & 0      \\[-0.05cm]
0      & \times & \times & \times & \times& \times& 0\\[-0.05cm]
0      & 0      & \times & \times & \times& \times& \times \\[-0.05cm]
0      & 0      & 0      & \times & \times& \times& \times \\[-0.05cm]
0      & 0      & 0      & 0      & \times& \times& \times \\[-0.05cm]
0      & 0      & 0      & 0      & 0     & \times& \times \\[-0.05cm]
0      & 0      & 0      & 0      & 0     & 0     & \times \\[-0.05cm]
0      & 0      & 0      & 0      & 0     & 0     & 0      \\[-0.05cm]
0      & 0      & 0      & 0      & 0     & 0     & 0      \\[-0.05cm]
0      & 0      & 0      & 0      & 0     & 0     & 0     
\end{array}\right]}_{=:H^2}\label{eq:qhshiftstep2final}
\end{align}
Notice, that $Q^2$ \eqref{eq:qhshiftstep2final} has almost the same structure as $Q^1$ \eqref{eq:qhshiftstep1} except for two additional rotations on the second and third row.
\pagebreak  

\textbf{Step 3:}
We start as we have in the second step by shifting the factorization $Q^2A^2=H^2$ one entry to the top left. Corresponding to \eqref{eq:qhshiftstep2} we get
\begin{align*}
 \hat{Q}^3A^3=\begin{array}{c@{\hspace{1mm}}c@{\hspace{1mm}}c@{\hspace{1mm}}
  c@{\hspace{1mm}}c@{\hspace{1mm}}c@{\hspace{1mm}}c@{\hspace{1mm}}
  c@{\hspace{1mm}}c@{\hspace{1mm}}c@{\hspace{1mm}}c@{\hspace{1mm}}}
   &   &   &   &   &   &   &\Rc&   &\Rc&   \\[-0.05cm]
   &   &   &   &   &   &\Rc&\rc&\Rc&\rc&\Rc\\[-0.05cm]
   &   &   &   &   &\Rc&\rc&\Rc&\rc&\Rc&\rc\\[-0.05cm] 
   &   &   &   &\Rc&\rc&\Rc&\rc&\Rc&\rc&   \\[-0.05cm]
   &   &   &\Rc&\rc&\Rc&\rc&\Rc&\rc&   &   \\[-0.05cm] 
   &   &\Rc&\rc&\Rc&\rc&\Rc&\rc&   &   &   \\[-0.05cm]
   &\Rc&\rc&\Rc&\rc&\Rc&\rc&   &   &   &   \\[-0.05cm] 
\Rc&\rc&\Rc&\rc&\Rc&\rc&   &   &   &   &   \\[-0.05cm] 
\rc&\Rc&\rc&\Rc&\rc&   &   &   &   &   &   \\[-0.05cm] 
   &\rc&\Rc&\rc&   &   &   &   &   &   &   \\[-0.05cm] 
   &   &\rc&   &   &   &   &   &   &   &
\end{array}
\left[\begin{array}{c@{\hspace{1mm}}c@{\hspace{1mm}}
                  c@{\hspace{1mm}}c@{\hspace{1mm}}c@{\hspace{1mm}}c@{\hspace{1mm}}c@{\hspace{1mm}}}
\times & 0& 0& 0&0& 0& 0 \\[-0.05cm]
\times & \times& 0& 0&0& 0& 0 \\[-0.05cm]
\times & \times& \times& 0&0& 0& 0 \\[-0.05cm]
\times & \times& \times& \times&0& 0& 0 \\[-0.05cm]
\times & \times& \times& \times&\times& 0& 0 \\[-0.05cm]
0&\times & \times& \times& \times&\times& 0 \\[-0.05cm]
0&0&\times & \times& \times& \times&\times\\[-0.05cm]
0&0&0 & \times& \times& \times&\times\\[-0.05cm]
0&0&0& 0& \times& \times&\times\\[-0.05cm]
0&0&0 & 0&0& \times&\times\\[-0.05cm]
0&0&0 & 0&0&0&\times
\end{array}\right]
\quad=
\quad
\left[
\begin{array}{c@{\hspace{1mm}}c@{\hspace{1mm}}c@{\hspace{1mm}}c@{\hspace{1mm}}c@{\hspace{1mm}}c@{\hspace{1mm}}c@{\hspace{1mm}}}
\times & \times & \times & \times & 0& 0     & 0      \\[-0.05cm]
\times & \times & \times & \times & \times& 0& 0      \\[-0.05cm]
0      & \times & \times & \times & \times& \times& 0\\[-0.05cm]
0      & 0      & \times & \times & \times& \times& \times \\[-0.05cm]
0      & 0      & 0      & \times & \times& \times& \times \\[-0.05cm]
0      & 0      & 0      & 0      & \times& \times& \times \\[-0.05cm]
0      & 0      & 0      & 0      & 0     & \times& \times \\[-0.05cm]
0      & 0      & 0      & 0      & 0     & 0     & \times \\[-0.05cm]
0      & 0      & 0      & 0      & 0     & 0     & \times \\[-0.05cm]
0      & 0      & 0      & 0      & 0     & 0     & \times \\[-0.05cm]
0      & 0      & 0      & 0      & 0     & 0     & \times
\end{array}\right]=\tilde{H}^3
\end{align*}
Analogous to step 2 we want to remove all Givens rotations acting on the first row so that we can apply the shift again. This time however we have two descending sequences starting in the first row. We could of course remove both outer-most sequences of Givens-rotations and add them again starting in the second row, but in our illustrative example this would already cost more than simply restarting an entire factorization from scratch. One may argue that for higher values $d$ this would not be the case. However when taking a closer look at our sequences of Givens-rotations we can see that if we continue this procedure we would have $3$ rotations acting on the first row when we arrive at the 4th step and so on, up to $2d-1$, which is the total number of descending sequences. We therefore have to solve this problem another way.\\
If we apply Theorem \ref{thm:qhtransform} below to the two outer-most sequences, we would arrive at
\begin{align}
 \begin{array}{c@{\hspace{1mm}}c@{\hspace{1mm}}c@{\hspace{1mm}}
  c@{\hspace{1mm}}c@{\hspace{1mm}}c@{\hspace{1mm}}c@{\hspace{1mm}}
  c@{\hspace{1mm}}c@{\hspace{1mm}}c@{\hspace{1mm}}c@{\hspace{1mm}}}
   &   &   &   &   &   &   &\Rc&   &\Rc&   \\[-0.05cm]
   &   &   &   &   &   &\Rc&\rc&\Rc&\rc&\Rc\\[-0.05cm]
   &   &   &   &   &\Rc&\rc&\Rc&\rc&\Rc&\rc\\[-0.05cm] 
   &   &   &   &\Rc&\rc&\Rc&\rc&\Rc&\rc&   \\[-0.05cm]
   &   &   &\Rc&\rc&\Rc&\rc&\Rc&\rc&   &   \\[-0.05cm] 
   &   &\Rc&\rc&\Rc&\rc&\Rc&\rc&   &   &   \\[-0.05cm]
   &\Rc&\rc&\Rc&\rc&\Rc&\rc&   &   &   &   \\[-0.05cm] 
\Rc&\rc&\Rc&\rc&\Rc&\rc&   &   &   &   &   \\[-0.05cm] 
\rc&\Rc&\rc&\Rc&\rc&   &   &   &   &   &   \\[-0.05cm] 
   &\rc&\Rc&\rc&   &   &   &   &   &   &   \\[-0.05cm] 
   &   &\rc&   &   &   &   &   &   &   &
\end{array}\rightarrow
 \begin{array}{c@{\hspace{1mm}}c@{\hspace{1mm}}c@{\hspace{1mm}}c@{\hspace{1mm}}
  c@{\hspace{1mm}}c@{\hspace{1mm}}c@{\hspace{1mm}}c@{\hspace{1mm}}
  c@{\hspace{1mm}}c@{\hspace{1mm}}c@{\hspace{1mm}}c@{\hspace{1mm}}}
   &   &   &   &   &   &   &   &\Rc&   &   &   \\[-0.05cm]
   &   &   &   &   &   &   &\Rc&\rc&\Rc&   &\Rc\\[-0.05cm]
   &   &   &   &   &   &\Rc&\rc&\Rc&\rc&\Rc&\rc\\[-0.05cm] 
   &   &   &   &   &\Rc&\rc&\Rc&\rc&\Rc&\rc&   \\[-0.05cm]
   &   &   &   &\Rc&\rc&\Rc&\rc&\Rc&\rc&   &   \\[-0.05cm] 
   &   &   &\Rc&\rc&\Rc&\rc&\Rc&\rc&   &   &   \\[-0.05cm]
   &   &\Rc&\rc&\Rc&\rc&\Rc&\rc&   &   &   &   \\[-0.05cm] 
   &\Rc&\rc&\Rc&\rc&\Rc&\rc&   &   &   &   &   \\[-0.05cm] 
\Rc&\rc&\Rc&\rc&\Rc&\rc&   &   &   &   &   &   \\[-0.05cm] 
\rc&   &\rc&\Rc&\rc&   &   &   &   &   &   &   \\[-0.05cm] 
   &   &   &\rc&   &   &   &   &   &   &   &
\end{array},\label{eq:qhshiftreorder}
\end{align}
i.e. all but one Givens rotation acting on the first row has been moved to the end of the outer-most sequence. Note, that when applying Theorem \ref{thm:qhtransform}, the values $(c,s)$ of all rotations involved will generally change and the application costs $\O(n)$ flops for each rotation that has been removed in the first row.\\
We can now remove the remaining rotation acting on the first row as we have in step 2.

\textbf{Step \boldmath$4,\ldots,2d$}:
We repeat the procedure applied in step $3$, only this time Theorem \ref{thm:qhtransform} has to be applied to $3$ descending sequences. With each step the number of Givens-rotations acting on the first row increases by one, therefore the number of descending sequences to which we apply Theorem \ref{thm:qhtransform} also increases by one with each step. This number is however limited by the total number of descending sequences, $2d-1$, and therefore the effort required by applying Theorem \ref{thm:qhtransform} is only $\O(nd)$ flops.

\textbf{Step \boldmath$2d+1,\ldots,k_{\max}$}:
From now on the entire procedure simply repeats itself.

The whole procedure is summarized in Algorithm \ref{alg:qhshift}.\\
\begin{algorithm}[H]
\caption{QH-Shift Algorithm}
\label{alg:qhshift}
\LinesNumbered
\SetAlgoLined
\KwIn{A sequence of $d$-banded consecutive matrices $\{A^{k}\}_{k=1,\ldots,k_{\max}}$ as in \eqref{eq:Ankshift}}
\KwOut{A sequence of upper triangular matrices $\{R^{k}\}_{k=1,\ldots,k_{\max}}$ with bandwidth $d$}
First step: Compute QH factorization $Q^1A^{1}=H^1$ using $2d-1$ sequences of Givens rotations as in \eqref{eq:qhshiftstep1}\;
Compute QR factorization ${G}^1Q^1A^{1}=R^1$ using one more sequence of Givens rotations\;
\For{$k=2,\ldots,k_{\max}$}
{
Shift factorization $Q^{k-1}A^{k-1}=H^{k-1}\rightarrow \hat{Q}^{k}A^{k}=\tilde{H}^{k}$ as in \eqref{eq:qhshiftstep2}\;
\label{alg:qhshift:transform}Move rotations acting on the first row to the left-most sequence as in Theorem \ref{thm:qhtransform}\;
Remove the last rotation acting on the first row by replacing the left-most sequence as in \eqref{eq:qhshiftreplacesequence}\;
Bring $\hat{H}^k$ to Hessenberg form by removing $2d-1$ entries in the last column as in \eqref{eq:qhshiftstep2final}\;
Compute QR factorization ${G}^kQ^kA^{k}=R^k$\;
}
\end{algorithm}
The reordering of Givens rotations applied in \eqref{eq:qhshiftreorder} is described in the following theorem:
\begin{theorem}
\label{thm:qhtransform}
Let $l,s,m\in\N$, $l+s\leq m$ and let
\begin{align*}
Q=(G_lG_{l-1}\cdots G_1)(G_{l+1}G_l\cdots G_1)\cdots(G_{l+s-1}G_{l+s-2}\cdots G_1)\in\C^{m\times m}
\end{align*}
be a product of $s$ descending sequences of Givens rotations, each starting in the first row and decreasing in length (from left to right).\\
Then $Q$ can be described as a product of $s$ sequences of Givens rotations of the form
\begin{align*}
 Q=(G_{l+s-1}G_{l+s-2}\cdots G_1)(G_{l+1}G_l\cdots G_2)(G_{l+2}G_{l+1}\cdots G_2)\cdots (G_{l+s-1}G_{l+s-2}\cdots G_2)
\end{align*}
\begin{align*}
Q=\begin{array}{c@{\hspace{1mm}}c@{\hspace{1mm}}c@{\hspace{1mm}}c@{\hspace{1mm}}c@{\hspace{1mm}}c@{\hspace{1mm}}
  c@{\hspace{1mm}}c@{\hspace{1mm}}c@{\hspace{1mm}}c@{\hspace{1mm}}
  c@{\hspace{1mm}}c@{\hspace{1mm}}c@{\hspace{1mm}}c@{\hspace{1mm}}c@{\hspace{1mm}}c@{\hspace{1mm}}}
   &   &   &   &   &   &   &\Rc&   &\Rc&   &\Rc&   &\Rc&   &\Rc\\[-0.05cm]
   &   &   &   &   &   &\Rc&\rc&\Rc&\rc&\Rc&\rc&\Rc&\rc&\Rc&\rc\\[-0.05cm] 
   &   &   &   &   &\Rc&\rc&\Rc&\rc&\Rc&\rc&\Rc&\rc&\Rc&\rc&   \\[-0.05cm]
   &   &   &   &\Rc&\rc&\Rc&\rc&\Rc&\rc&\Rc&\rc&\Rc&\rc&   &   \\[-0.05cm] 
   &   &   &\Rc&\rc&\Rc&\rc&\Rc&\rc&\Rc&\rc&\Rc&\rc&   &   &   \\[-0.05cm]
   &   &\Rc&\rc&\Rc&\rc&\Rc&\rc&\Rc&\rc&\Rc&\rc&   &   &   &   \\[-0.05cm] 
   &   &\rc&\Rc&\rc&\Rc&\rc&\Rc&\rc&\Rc&\rc&   &   &   &   &   \\[-0.05cm] 
   &   &   &\rc&\Rc&\rc&\Rc&\rc&\Rc&\rc&   &   &   &   &   &   \\[-0.05cm] 
   &   &   &   &\rc&\Rc&\rc&\Rc&\rc&   &   &   &   &   &   &   \\[-0.05cm] 
   &   &   &   &   &\rc&\Rc&\rc&   &   &   &   &   &   &   &   \\[-0.05cm] 
   &   &   &   &   &   &\rc&   &   &   &   &   &   &   &   &
\end{array}
=
\begin{array}{c@{\hspace{1mm}}c@{\hspace{1mm}}c@{\hspace{1mm}}c@{\hspace{1mm}}c@{\hspace{1mm}}c@{\hspace{1mm}}c@{\hspace{1mm}}c@{\hspace{1mm}}
  c@{\hspace{1mm}}c@{\hspace{1mm}}c@{\hspace{1mm}}c@{\hspace{1mm}}
  c@{\hspace{1mm}}c@{\hspace{1mm}}c@{\hspace{1mm}}c@{\hspace{1mm}}c@{\hspace{1mm}}c@{\hspace{1mm}}}
   &   &   &   &   &   &   &   &   &\Rc&   &   &   &   &   &   &   &   \\[-0.05cm]
   &   &   &   &   &   &   &   &\Rc&\rc&\Rc&   &\Rc&   &\Rc&   &\Rc&   \\[-0.05cm] 
   &   &   &   &   &   &   &\Rc&\rc&\Rc&\rc&\Rc&\rc&\Rc&\rc&\Rc&\rc&   \\[-0.05cm]
   &   &   &   &   &   &\Rc&\rc&\Rc&\rc&\Rc&\rc&\Rc&\rc&\Rc&\rc&   &   \\[-0.05cm] 
   &   &   &   &   &\Rc&\rc&\Rc&\rc&\Rc&\rc&\Rc&\rc&\Rc&\rc&   &   &   \\[-0.05cm]
   &   &   &   &\Rc&\rc&\Rc&\rc&\Rc&\rc&\Rc&\rc&\Rc&\rc&   &   &   &   \\[-0.05cm] 
   &   &   &\Rc&\rc&\Rc&\rc&\Rc&\rc&\Rc&\rc&\Rc&\rc&   &   &   &   &   \\[-0.05cm] 
   &   &\Rc&\rc&   &\rc&\Rc&\rc&\Rc&\rc&\Rc&\rc&   &   &   &   &   &   \\[-0.05cm] 
   &\Rc&\rc&   &   &   &\rc&\Rc&\rc&\Rc&\rc&   &   &   &   &   &   &   \\[-0.05cm] 
\Rc&\rc&   &   &   &   &   &\rc&\Rc&\rc&   &   &   &   &   &   &   &   \\[-0.05cm] 
\rc&   &   &   &   &   &   &   &\rc&   &   &   &   &   &   &   &   &
\end{array}
\end{align*}
\end{theorem}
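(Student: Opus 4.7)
The plan is to prove Theorem \ref{thm:qhtransform} by induction on $s$, using as the single algebraic ingredient the standard \emph{turnover} (or shift-through) lemma for Givens rotations: for any product $G_i G_{i+1} G_i$ there exist rotations $G'_{i+1}, G'_i, G'_{i+1}$ (generically with different $(c,s)$-parameters) with
\[
G_i\,G_{i+1}\,G_i \;=\; G'_{i+1}\,G'_i\,G'_{i+1}.
\]
This holds because both products act nontrivially only on the three consecutive rows $\{i,i+1,i+2\}$, and any $3\times 3$ unitary admits either factorisation (see \cite{MatrixComp08}). Paired with the trivial commutativity of rotations acting on disjoint row-pairs, this is the only tool required.

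For the outer inductive step from $s-1$ to $s$, I peel off the leftmost descending sequence $(G_l\cdots G_1)$ on the LHS and apply the inductive hypothesis (with $l$ replaced by $l+1$ and $s$ by $s-1$) to the remaining $s-1$ sequences. Comparison with the target RHS reduces the theorem to the auxiliary identity
\[
(G_a\cdots G_1)\,(G_b\cdots G_1) \;=\; (G_b\cdots G_1)\,(G_{a+1}\cdots G_2),\qquad 1\le a<b,
\]
relating just two descending sequences. I would prove this by a secondary induction on $a$. The base $a=1$ reduces, after commuting the leading $G_1$ past the disjoint-row rotations $G_b,\ldots,G_3$, to one turnover $G_1G_2G_1=G_2G_1G_2$ and a commutation back. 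For the step $a-1\to a$, I write $(G_a\cdots G_1)=G_a\cdot(G_{a-1}\cdots G_1)$, apply the inductive hypothesis to the product of the inner sequence with $(G_b\cdots G_1)$, and am left with the intermediate identity
\[
G_a\,(G_b\cdots G_1) \;=\; (G_b\cdots G_1)\,G_{a+1},
\]
which in turn follows from a single turnover $G_aG_{a+1}G_a=G_{a+1}G_aG_{a+1}$ together with disjoint-row commutations that move $G_a$ through the rest of the sequence.

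The main obstacle is the bookkeeping: at each step one must identify the correct triple of rotations forming a turnover centre and verify that the surrounding rotations can be parked out of the way by disjoint-row commutations without altering any other part of the diagram. The arrow-diagram language of Section~\ref{sec:algorithm}, borrowed from \cite{MatrixComp08,RotFigPackage}, is the natural medium for a formal write-up, with each elementary move a local pictorial transformation whose cumulative effect reproduces the diagram shown in the theorem statement.
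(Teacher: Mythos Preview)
Your argument is correct and is essentially the same as the paper's. The paper states the \emph{shift-through lemma of higher length}
\[
(G_a\cdots G_1)(G_b\cdots G_1)\;=\;(G_b\cdots G_1)(G_{a+1}\cdots G_2),\qquad a<b,
\]
derives it by iterated turnovers just as you do, and then obtains the theorem by applying this two-sequence identity pairwise $s-1$ times ``from right to left'', which is the iterative mirror image of your induction that peels off the leftmost sequence. Your intermediate identity $G_a(G_b\cdots G_1)=(G_b\cdots G_1)G_{a+1}$ and the subsequent induction on $a$ simply make explicit what the paper leaves to the phrase ``repeated application of this lemma''; the underlying mechanism is identical.
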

\begin{proof}
We start by noting the so-called {\sl shift-through lemma} of \cite{MatrixComp08} (see Lemma 9.38 there), which states that a product of $3$ Givens rotations of the form $G_2G_1G_2$ can be transformed into $3$ Givens rotations of the form $G_1G_2G_1$ and vice versa, i.e.
\begin{align*}
Q=\begin{array}{c@{\hspace{1mm}}c@{\hspace{1mm}}c@{\hspace{1mm}}}
&\Rc&      \\[-0.05cm] 
\Rc&\rc&\Rc      \\[-0.05cm]
\rc&&\rc
\end{array}=
\begin{array}{c@{\hspace{1mm}}c@{\hspace{1mm}}c@{\hspace{1mm}}}
\Rc&&\Rc\\[-0.05cm] 
\rc&\Rc&\rc      \\[-0.05cm]
&\rc&
\end{array}.
\end{align*}
Of course the values $c,s$ of all rotations involved change. Note that this only holds for products without intermediate rotations, e.g. it could not be applied to 
\begin{align*}
Q=\begin{array}{c@{\hspace{1mm}}c@{\hspace{1mm}}c@{\hspace{1mm}}}
&\Rc&\\[-0.05cm] 
\Rc&\rc&\Rc\\[-0.05cm] 
\rc&\Rc&\rc      \\[-0.05cm]
&\rc&
\end{array}.
\end{align*}
As described in \cite{implicitDoubleShiftQRalgorithm} a repeated application of this lemma to $2$ descending sequences of Givens rotations $(G_lG_{l-1}\cdots G_1)(G_{l+1}G_l\cdots G_1)$ leads to the {\sl shift-through lemma of higher length}:
\begin{align*}
(G_l\cdots G_1)(G_{l+1}\cdots G_1)=&(G_{l+1}\cdots G_1)(G_{l+1}\cdots G_2)\\
\begin{array}{c@{\hspace{1mm}}c@{\hspace{1mm}}c@{\hspace{1mm}}c@{\hspace{1mm}}c@{\hspace{1mm}}c@{\hspace{1mm}}
  c@{\hspace{1mm}}c@{\hspace{1mm}}c@{\hspace{1mm}}c@{\hspace{1mm}}
  c@{\hspace{1mm}}c@{\hspace{1mm}}}
   &   &   &   &   &   &   &   &   &\Rc&   &\Rc\\[-0.05cm]
   &   &   &   &   &   &   &   &\Rc&\rc&\Rc&\rc\\[-0.05cm] 
   &   &   &   &   &   &   &\Rc&\rc&\Rc&\rc&   \\[-0.05cm]
   &   &   &   &   &   &\Rc&\rc&\Rc&\rc&   &   \\[-0.05cm] 
   &   &   &   &   &\Rc&\rc&\Rc&\rc&   &   &   \\[-0.05cm]
   &   &   &   &   &\rc&\Rc&\rc&   &   &   &   \\[-0.05cm] 
   &   &   &   &   &   &\rc&   &   &   &   &
\end{array}=\begin{array}{c@{\hspace{1mm}}c@{\hspace{1mm}}c@{\hspace{1mm}}c@{\hspace{1mm}}c@{\hspace{1mm}}c@{\hspace{1mm}}
  c@{\hspace{1mm}}c@{\hspace{1mm}}c@{\hspace{1mm}}c@{\hspace{1mm}}
  c@{\hspace{1mm}}c@{\hspace{1mm}}}
   &   &   &   &   &   &   &   &   &   &\Rc&\\[-0.05cm]
   &   &   &   &   &   &   &\Rc&   &\Rc&\rc&\Rc\\[-0.05cm] 
   &   &   &   &   &   &\Rc&\rc&\Rc&\rc&   &\rc\\[-0.05cm]
   &   &   &   &   &\Rc&\rc&\Rc&\rc&   &   &\\[-0.05cm] 
   &   &   &   &\Rc&\rc&\Rc&\rc&   &   &   &\\[-0.05cm]
   &   &   &   &\rc&\Rc&\rc&   &   &   &   &\\[-0.05cm] 
   &   &   &   &   &\rc&   &   &   &   &   &
\end{array}=&
\begin{array}{c@{\hspace{1mm}}c@{\hspace{1mm}}c@{\hspace{1mm}}c@{\hspace{1mm}}c@{\hspace{1mm}}c@{\hspace{1mm}}
  c@{\hspace{1mm}}c@{\hspace{1mm}}c@{\hspace{1mm}}c@{\hspace{1mm}}
  c@{\hspace{1mm}}c@{\hspace{1mm}}}
   &   &   &   &   &   &   &   &   &   &\Rc&\\[-0.05cm]
   &   &   &   &   &   &   &   &   &\Rc&\rc&\Rc\\[-0.05cm] 
   &   &   &   &   &   &\Rc&   &\Rc&\rc&\Rc&\rc\\[-0.05cm]
   &   &   &   &   &\Rc&\rc&\Rc&\rc&   &\rc&\\[-0.05cm] 
   &   &   &   &\Rc&\rc&\Rc&\rc&   &   &   &\\[-0.05cm]
   &   &   &   &\rc&\Rc&\rc&   &   &   &   &\\[-0.05cm] 
   &   &   &   &   &\rc&   &   &   &   &   &
\end{array}=\cdots=
 \begin{array}{c@{\hspace{1mm}}c@{\hspace{1mm}}c@{\hspace{1mm}}c@{\hspace{1mm}}c@{\hspace{1mm}}c@{\hspace{1mm}}
  c@{\hspace{1mm}}c@{\hspace{1mm}}c@{\hspace{1mm}}c@{\hspace{1mm}}
  c@{\hspace{1mm}}c@{\hspace{1mm}}}
   &   &   &   &   &   &   &   &   &\Rc&   &   \\[-0.05cm]
   &   &   &   &   &   &   &   &\Rc&\rc&\Rc&   \\[-0.05cm] 
   &   &   &   &   &   &   &\Rc&\rc&\Rc&\rc&   \\[-0.05cm]
   &   &   &   &   &   &\Rc&\rc&\Rc&\rc&   &   \\[-0.05cm] 
   &   &   &   &   &\Rc&\rc&\Rc&\rc&   &   &   \\[-0.05cm]
   &   &   &   &\Rc&\rc&\Rc&\rc&   &   &   &   \\[-0.05cm] 
   &   &   &   &\rc&   &\rc&   &   &   &   &
\end{array}
\end{align*}
Figuratively speaking we have moved the rotation from the top right to the lower left. If the right sequence is of higher length than the left sequence, the additional Givens rotations will be added to the left sequence in the end, i.e.
\begin{align*}
 (G_lG_{l-1}\cdots G_1)(G_{l+t}G_{l+t-1}\cdots G_1)=&(G_{l+t}G_{l+t-1}\cdots G_1)(G_{l+1}G_{l}\cdots G_2)\\
 \begin{array}{c@{\hspace{1mm}}c@{\hspace{1mm}}c@{\hspace{1mm}}c@{\hspace{1mm}}c@{\hspace{1mm}}c@{\hspace{1mm}}
  c@{\hspace{1mm}}c@{\hspace{1mm}}c@{\hspace{1mm}}c@{\hspace{1mm}}
  c@{\hspace{1mm}}c@{\hspace{1mm}}}
   &   &   &   &   &   &   &   &   &\Rc&   &\Rc\\[-0.05cm]
   &   &   &   &   &   &   &   &\Rc&\rc&\Rc&\rc\\[-0.05cm] 
   &   &   &   &   &   &   &\Rc&\rc&\Rc&\rc&   \\[-0.05cm]
   &   &   &   &   &   &\Rc&\rc&\Rc&\rc&   &   \\[-0.05cm] 
   &   &   &   &   &\Rc&\rc&\Rc&\rc&   &   &   \\[-0.05cm]
   &   &   &   &   &\rc&\Rc&\rc&   &   &   &   \\[-0.05cm] 
   &   &   &   &   &\Rc&\rc&   &   &   &   &   \\[-0.05cm]
   &   &   &   &\Rc&\rc&   &   &   &   &   &   \\[-0.05cm]
   &   &   &\Rc&\rc&   &   &   &   &   &   &   \\[-0.05cm]
   &   &   &\rc&   &   &   &   &   &   &   &
   \end{array}
=&\cdots=
 \begin{array}{c@{\hspace{1mm}}c@{\hspace{1mm}}c@{\hspace{1mm}}c@{\hspace{1mm}}c@{\hspace{1mm}}c@{\hspace{1mm}}
  c@{\hspace{1mm}}c@{\hspace{1mm}}c@{\hspace{1mm}}c@{\hspace{1mm}}
  c@{\hspace{1mm}}c@{\hspace{1mm}}}
   &   &   &   &   &   &   &   &   &\Rc&   &   \\[-0.05cm]
   &   &   &   &   &   &   &   &\Rc&\rc&\Rc&   \\[-0.05cm] 
   &   &   &   &   &   &   &\Rc&\rc&\Rc&\rc&   \\[-0.05cm]
   &   &   &   &   &   &\Rc&\rc&\Rc&\rc&   &   \\[-0.05cm] 
   &   &   &   &   &\Rc&\rc&\Rc&\rc&   &   &   \\[-0.05cm]
   &   &   &   &\Rc&\rc&\Rc&\rc&   &   &   &   \\[-0.05cm] 
   &   &   &\Rc&\rc&   &\rc&   &   &   &   &   \\[-0.05cm]
   &   &\Rc&\rc&   &   &   &   &   &   &   &   \\[-0.05cm]
   &\Rc&\rc&   &   &   &   &   &   &   &   &   \\[-0.05cm]
   &\rc&   &   &   &   &   &   &   &   &   &      
\end{array}
\end{align*}
The theorem follows directly from applying the shift-through-lemma of higher length pairwise $s-1$ times to the descending sequences from the right to the left.
\end{proof}

\textbf{Restarted QH-Shift.}
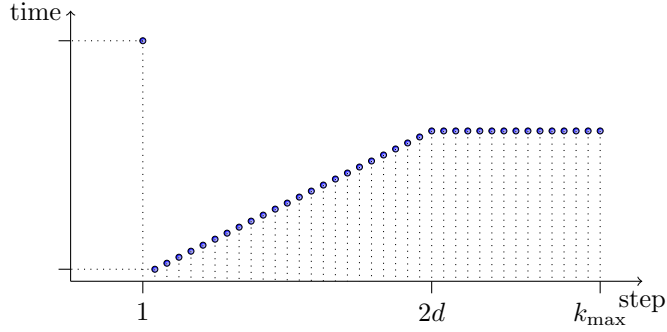
\begin{figure}
\begin{center}
\begin{tikzpicture}[scale=0.8]

\draw[->] (0,0) -- (9.5,0) node[anchor=north] {step};
\draw[->] (0,0) -- (0,4.5) node[anchor=east] {time};

\draw (1.2,0)--(1.2,-0.2) node[anchor=north] {$1$};
\draw (6.0,0)--(6.0,-0.2) node[anchor=north] {$2d$};
\draw (8.8,0)--(8.8,-0.2) node[anchor=north] {$k_{\max}$};

\draw (0,0.2)--(-0.2,0.2) node[left] {};
\draw (0,4)--(-0.2,4) node[left] {};

\draw[smallball blue](1.2,4) circle (0.05);
\draw[dotted](1.2,4)--(1.2,0);
\draw[dotted](1.2,4)--(0,4);
\draw[dotted](1.4,0.2)--(0,0.2);
\foreach \i in {1.4,1.6,...,6.2}
{qhshiftonereset
        \draw[smallball blue](\i,\i/2-0.5) circle (0.05);
        \draw[dotted](\i,\i/2-0.5)--(\i,0);
}
\foreach \i in {6.2,6.4,...,8.8}
{
        \draw[smallball blue](\i,2.5) circle (0.05);
        \draw[dotted](\i,2.5)--(\i,0);
}	
\end{tikzpicture}
\caption{QH-Shift Algorithm without reset: Time per step}
\label{fig:qhshiftnoreset}
\end{center}
\end{figure}
As stated before, the number of descending Givens sequences to which Theorem \ref{thm:qhtransform} is applied in Algorithm \ref{alg:qhshift} 
grows with each step. Therefore the algorithm is fastest in the second step and then slows down until it reaches step $2d$, as is illustrated 
in Figure \ref{fig:qhshiftnoreset}. Depending on the time required for the initial QH factorization in step 1 and the time required in the 
following steps, it may be more efficient to ``restart'' the method after step number $r$ (for some $r\in\{2,...,2d\}$) in order to take advantage 
of the cheap steps with number $2,...,r$. This is illustrated in Figure \ref{fig:qhshiftonereset}. The time parameters required to determine the 
optimal point $r$ for a restart can be estimated during runtime (see e.g. Figure \ref{fig:impureLaurent2} below).
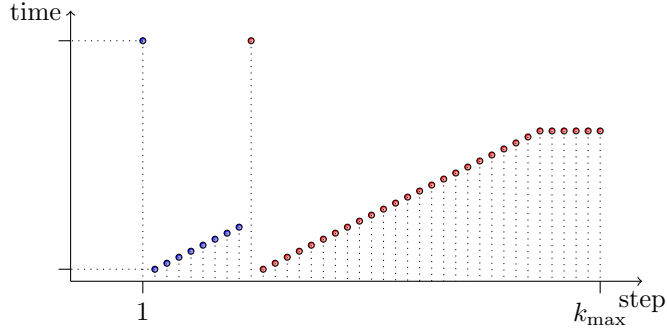
\begin{figure}
\begin{center}
\begin{tikzpicture}[scale=0.8]

\draw[->] (0,0) -- (9.5,0) node[anchor=north] {step};
\draw[->] (0,0) -- (0,4.5) node[anchor=east] {time};

\draw (1.2,0)--(1.2,-0.2) node[anchor=north] {$1$};
\draw (8.8,0)--(8.8,-0.2) node[anchor=north] {$k_{\max}$};

\draw (0,0.2)--(-0.2,0.2) node[left] {};
\draw (0,4)--(-0.2,4) node[left] {};

\draw[smallball blue](1.2,4) circle (0.05);
\draw[dotted](1.2,4)--(1.2,0);
\draw[dotted](1.2,4)--(0,4);
\draw[dotted](1.4,0.2)--(0,0.2);

\foreach \i in {1.4,1.6,...,3}
{
        \draw[smallball blue](\i,\i/2-0.5) circle (0.05);
        \draw[dotted](\i,\i/2-0.5)--(\i,0);
}

\draw[smallball red](3,4) circle (0.05);
\draw[dotted](3,4)--(3,0);
\foreach \i in {1.4,1.6,...,6.2}
{
        \draw[smallball red](\i+1.8,\i/2-0.5) circle (0.05);
        \draw[dotted](\i+1.8,\i/2-0.5)--(\i+1.8,0);
}

\foreach \i in {6.2,6.4,...,7}
{
        \draw[smallball red](\i+1.8,2.5) circle (0.05);
        \draw[dotted](\i+1.8,2.5)--(\i+1.8,0);
}	
\end{tikzpicture}
\caption{QH-Shift Algorithm with one restart after $r=9$ steps.}
\label{fig:qhshiftonereset}
\end{center}
\end{figure}

\section{Applications}
\label{sec:applications}
\subsection{Laurent Operators with local impurities}
\label{ssec:laurentOperatorsImpurities}
We start with bi-infinite matrices with constant diagonals, also known as Laurent operators. Let $a$ be a continuous function on the complex unit circle $\T$, and denote by $(a_j)_{j\in\Z}$ the sequence of Fourier coefficients of $a$ so that
\begin{align}
 a(t)=\sum_{j\in\Z} a_j t^j,\quad t=e^{i\theta}\in\T.\label{eq:laurentsymbol}
\end{align}
We denote the corresponding bounded linear operator on $\ell^2:=\ell^2(\Z)$ (see \cite{BoeSi2}), as well as the infinite matrix
\begin{align*}
\left(
\begin{array}{ccccccc}
\ddots&\ddots&\ddots&\ddots&\ddots&\ddots&\ddots\\
\ddots&a_0&a_{-1}&a_{-2}&a_{-3}&a_{-4}&\ddots\\
\ddots&a_1&a_0&a_{-1}&a_{-2}&a_{-3}&\ddots\\
\ddots&a_2&a_1&a_0&a_{-1}&a_{-2}&\ddots\\
\ddots&a_3&a_2&a_1&a_0&a_{-1}&\ddots\\
\ddots&a_4&a_3&a_2&a_1&a_0&\ddots\\
\ddots&\ddots&\ddots&\ddots&\ddots&\ddots&\ddots
\end{array}\right),
\end{align*}
by $L(a)$.  Via the Fourier transform, the convolution operator $L(a)$ corresponds to multiplication on $L^2(\T)$ by the function $a$ from \eqref{eq:laurentsymbol}, which is referred to as the symbol of the operator $L(a)$. In particular, the spectrum of $L(a)$ is the image of $\T$ under the function $a$. Moreover, Laurent operators are normal and therefore $\speps(L(a))=\spec(L(a))+\epsilon\D$ can be explicitly computed. We lose these properties when we add so-called local impurities, meaning operators $E:\ell^2\rightarrow \ell^2$ with a finitely supported matrix. In condensed matter physics this corresponds to a local impurity in an otherwise periodic crystal structure. The resulting operator $L(a)+E$ is in general non-normal, and its spectrum and pseudospectra are difficult to approximate (see e.g. \cite{BoeEmLi,BoeEmSok02}). So let us apply our algorithm.

In this example we consider exponentially decreasing Fourier coefficients $a_k$ as $k\rightarrow \pm \infty$, so that $L(a)$ is in the Wiener algebra $\W$ (see Section \ref{ssec:banddominatedtobanded}). More precisely, when approximating $L(a)$ by operators with finite bandwidth, say $L_d(a)$ with bandwidth $d\in\N$, we can explicitly give upper bounds on the approximation error as in \eqref{eq:Werror}. Here this means
\begin{align}\label{eq:werrorexample}
\norm{L(a)-L_d(a)}\leq\lW L(a)-L(b)\rW=\sum_{\abs{k}>d}\abs{a_k}\leq\eta_d
\end{align}
for some error $\eta_d>0$. 
For simplicity, we choose $E$ to be of bandwidth $\leq d$, although impurities with larger support could be treated analogously with an appropriate error estimation in \eqref{eq:werrorexample}.
The resulting lower and upper bounds on $\speps(L(a)+E)$ for a given bandwidth $d$ and cut-size $N$ can be summed up, using \eqref{eq:spepsAB} and \eqref{eq:sandwich_inters}, as follows:
\begin{align}
 \bigcup_{c=0}^{d-1} \Gamma_{\eps-\eta_d}^{n,c}(L_d(a)+E)\ \subset\  \speps(L(a)+E)\ \subset\ \bigcap_{c=0}^{d-1} \Gamma_{\eps+\eta_d+\delta_N}^{n,c}(L_d(A)+E),\label{eq:laurent_specincl}
\end{align}
where $\eps>0$ and $\delta_N\in\O(\frac1N)$ denotes the approximation error introduced in \eqref{eq:epsn_block}. In order to compute the spectral
inclusion sets $\Gamma^{n,c}_{\eps}(L_d(a)+E)$ from \eqref{eq:defGammaM}, we can make use of the fact that we only need to consider a finite number
(growing linearly with $n$) of positions $k$, since $L(a)+E$ is constant along its diagonals as we move away from the support of $E$.

We rewrite the pseudospectral sub- and supersets from \eqref{eq:laurent_specincl} as
\begin{align}
 &\left\{\lambda\in\C:\; F_l(\lambda)<\eps-\eta_d\right\}\text{ and}\label{eq:laurent_specsub}\\
 &\left\{\lambda\in\C:\; F_u(\lambda)<\eps+\eta_d+\delta_N\right\}\label{eq:laurent_specsup},
\end{align}
respectively, where 
\begin{align*}
 F_l(\lambda):=&\min_{c=0,\ldots,d-1}\left(\min_{k\in c+d\Z}\left(\min(\nu((A-\lambda I)|_{\ell^2(J^n_k)}),\nu((A-\lambda I)^*|_{\ell^2(J^n_k)}))\right)\right)\\
 F_u(\lambda):=&\max_{c=0,\ldots,d-1}\left(\min_{k\in c+d\Z}\left(\min(\nu((A-\lambda I)|_{\ell^2(J^n_k)}),\nu((A-\lambda I)^*|_{\ell^2(J^n_k)}))\right)\right)
\end{align*}
with $A:=L_d(a)+E$. 
If we are only interested in $\speps(A)$ for a few values $\eps>0$, then we can approximate connected components of these sets by determining the boundary of each set. To that end, we use continuation methods to determine all $\lambda\in\C$ which satisfy
\begin{align}
  F_l(\lambda)-(\eps-\eta_d)&=0,\label{eq:boundaryFuncl}\\
  F_u(\lambda)-(\eps+\eta_d+\delta_n)&=0,\label{eq:boundaryFuncu}
\end{align}
respectively.
There have been several approaches to computing the boundary curves of pseudospectra of finite square matrices using gradient-based methods, see e.g.~\cite{Bruehl:CurveTracing, Bekas:Cobra}. However, since $F_u$ and $F_l$ involve several nested minima and maxima of smallest singular values of rectangular matrices, they are non-smooth, so that these methods are not well suited to our case.

In \cite{Mezher:PAT} a piecewise linear (PL)-continuation method was used to approximate pseudospectral boundaries of matrices, and this algorithm can be easily modified to be applied to $F_u$ and $F_l$ as well. The idea is to triangulate the complex plane and determine all triangles in which the signs of $F_u$ (respectively $F_l$) are not equal on all three vertices. The functions have therefore to be evaluated on these vertices only, and each function evaluation consists of two applications of the QH-shift method (one for $(A-\lambda I)$ and one for $(A-\lambda I)^*$). We note the difficulty of finding a starting point, an initial triangle, which can be solved using a coarse grid or a bisection based method (see \cite{Mezher:PAT}). The boundaries can be estimated prior using the coarse upper bound $\speps(L(a)+E)\subset\spec(L(a))+(\eps+\norm{E})\D$.

We applied this method to a Laurent operator defined by the coefficients 
\begin{align}
a_k:=\left\{
\begin{array}{ll}
\left(\frac{1}{2}\right)^k+1.1\left(\frac{1}{2i}\right)^k,&k>0\\
3.1, &k=0\\
\left(\frac{i}{2}\right)^{-k},&k<0
\end{array}
\right.
\label{eq:impureLaurentsymbol}
\end{align}
and an impurity $E$ which is a scaled and shifted $10\times 10$ Grcar-matrix. We refer to $L(a)$ as ``the fish'', motivated by the shape of its
spectrum. The approximation error \eqref{eq:werrorexample} can be estimated as $\eta_d\leq\frac{1}{2^{d-2}}$. The results can be seen in Figure
\ref{fig:impureLautent}. In addition to the upper and lower bounds on $\speps(L(a)+E)$, we see that the superset does not contain the origin,
implying that this particular operator is invertible. Furthermore we compare the speed of the QH-shift method, the restarted QH-shift method
and the classic QR-decomposition, including the SVD, for a single $\lambda\in\C$, as can be seen in Figure \ref{fig:impureLaurent2}.
\begin{figure}[h]
\includegraphics[width=0.5\linewidth]{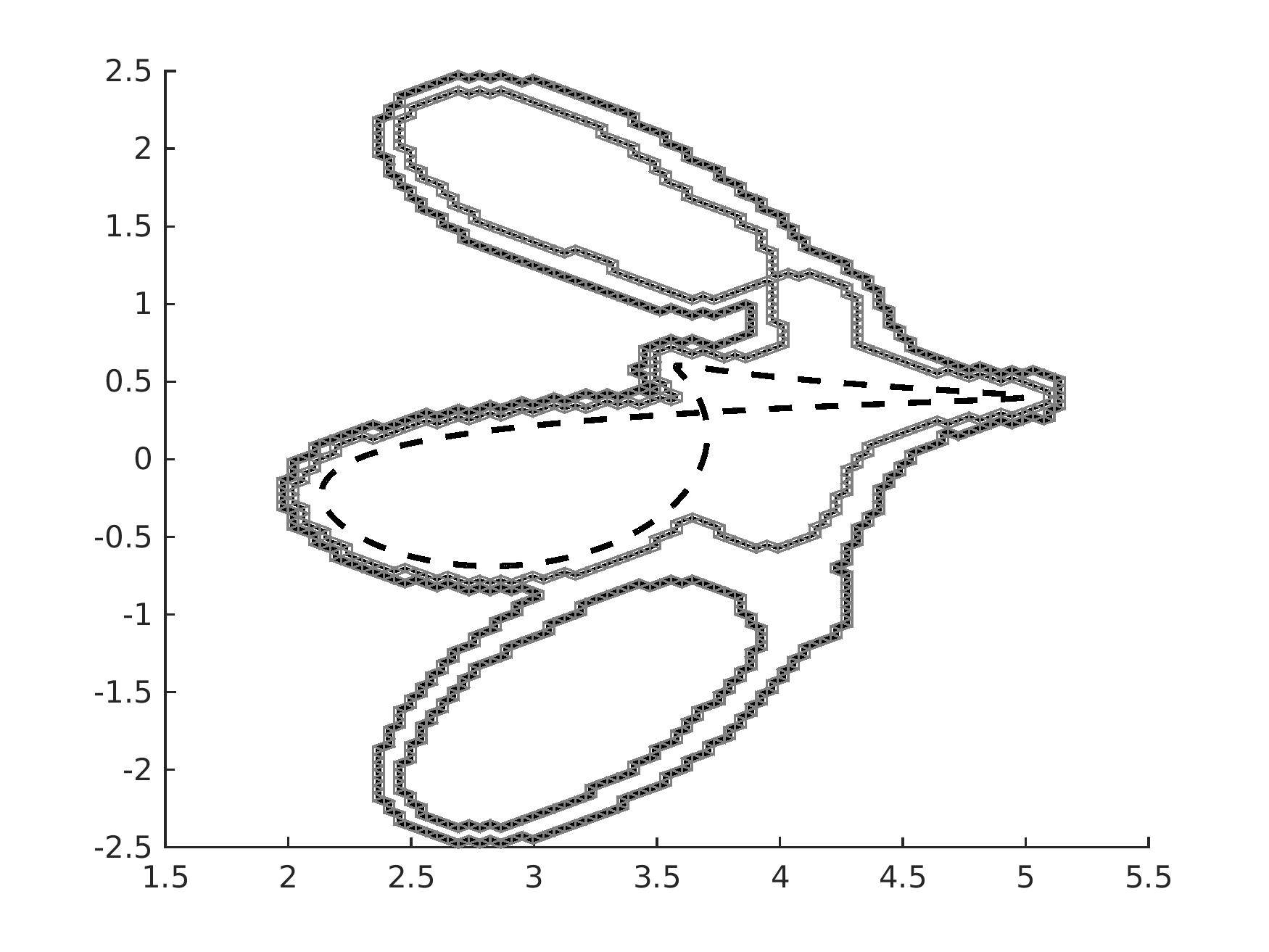}
\includegraphics[width=0.5\linewidth]{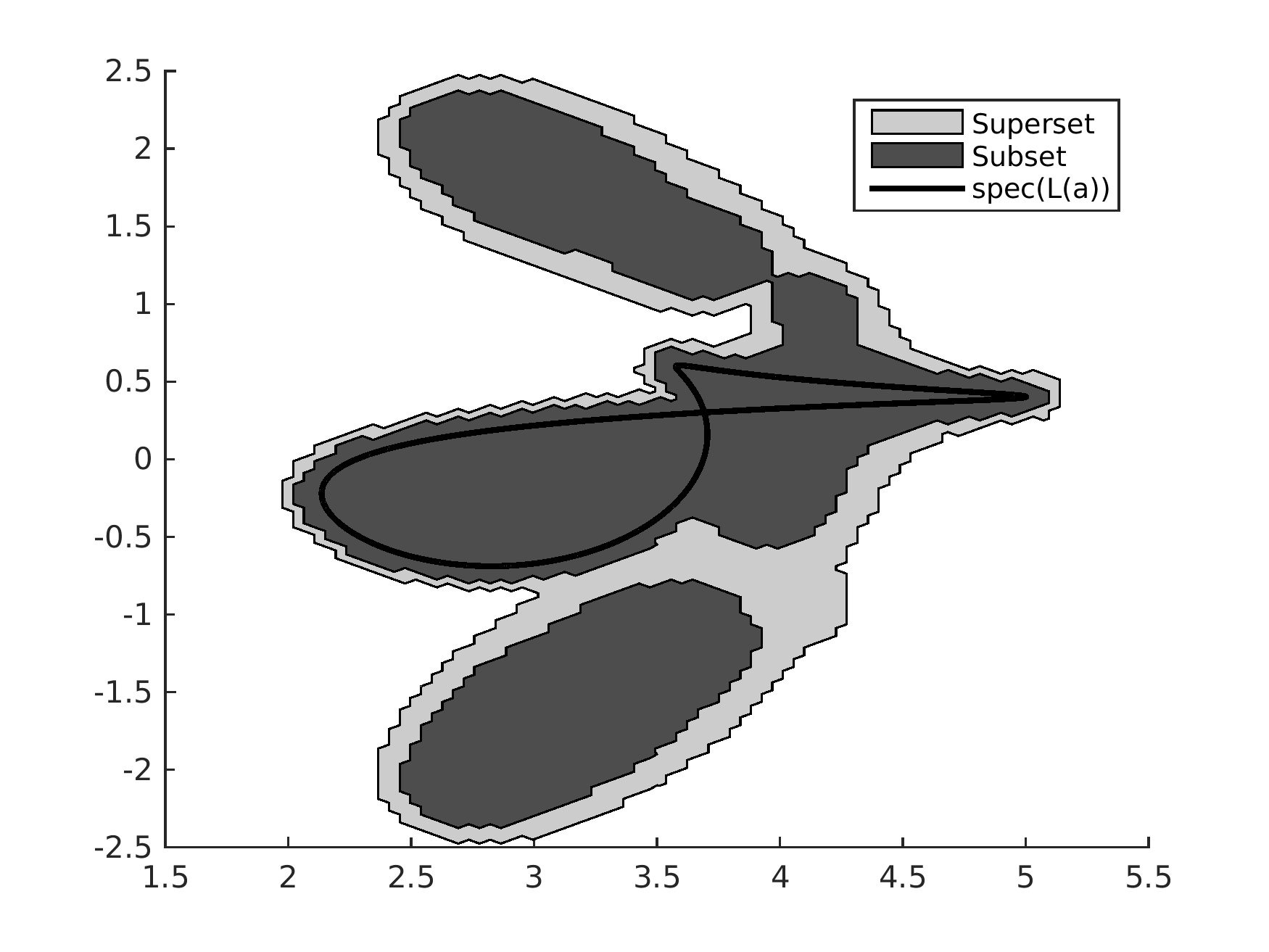}
\caption{Boundary sets for the impure Laurent operator with symbol \eqref{eq:impureLaurentsymbol} and a scaled $10\times 10$ Grcar-matrix as local impurity. 
Left: Triangulation of solution curves of \eqref{eq:boundaryFuncl} and \eqref{eq:boundaryFuncu} and $\spec(L(a))$ superimposed as a dotted line; 
Right: Visualization of the resulting sub- and superset of $\speps(L(a)+E)$. Dimensions are $d=15$, $N=200$ and $\eps=0.1$, and the approximation errors are $\eta_d\leq 1.2\cdot10^{-4}$ and $\delta_N\leq0.0512$. We used a total of $1486$ equilateral triangles of side-length $0.05$ for the computations.}
\label{fig:impureLautent}
\end{figure}
\begin{figure}[h]
\includegraphics[width=0.5\linewidth]{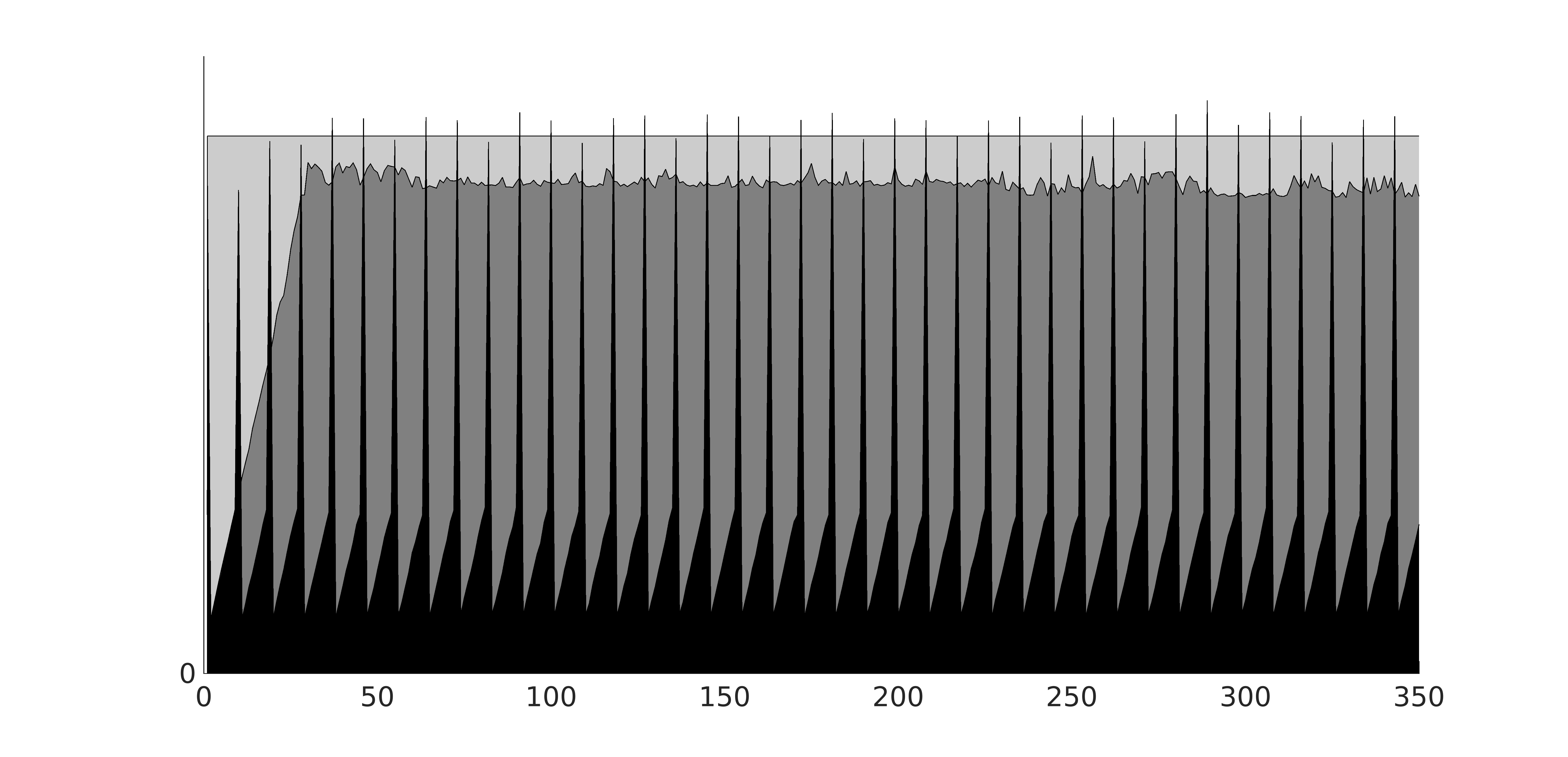}
\includegraphics[width=0.5\linewidth]{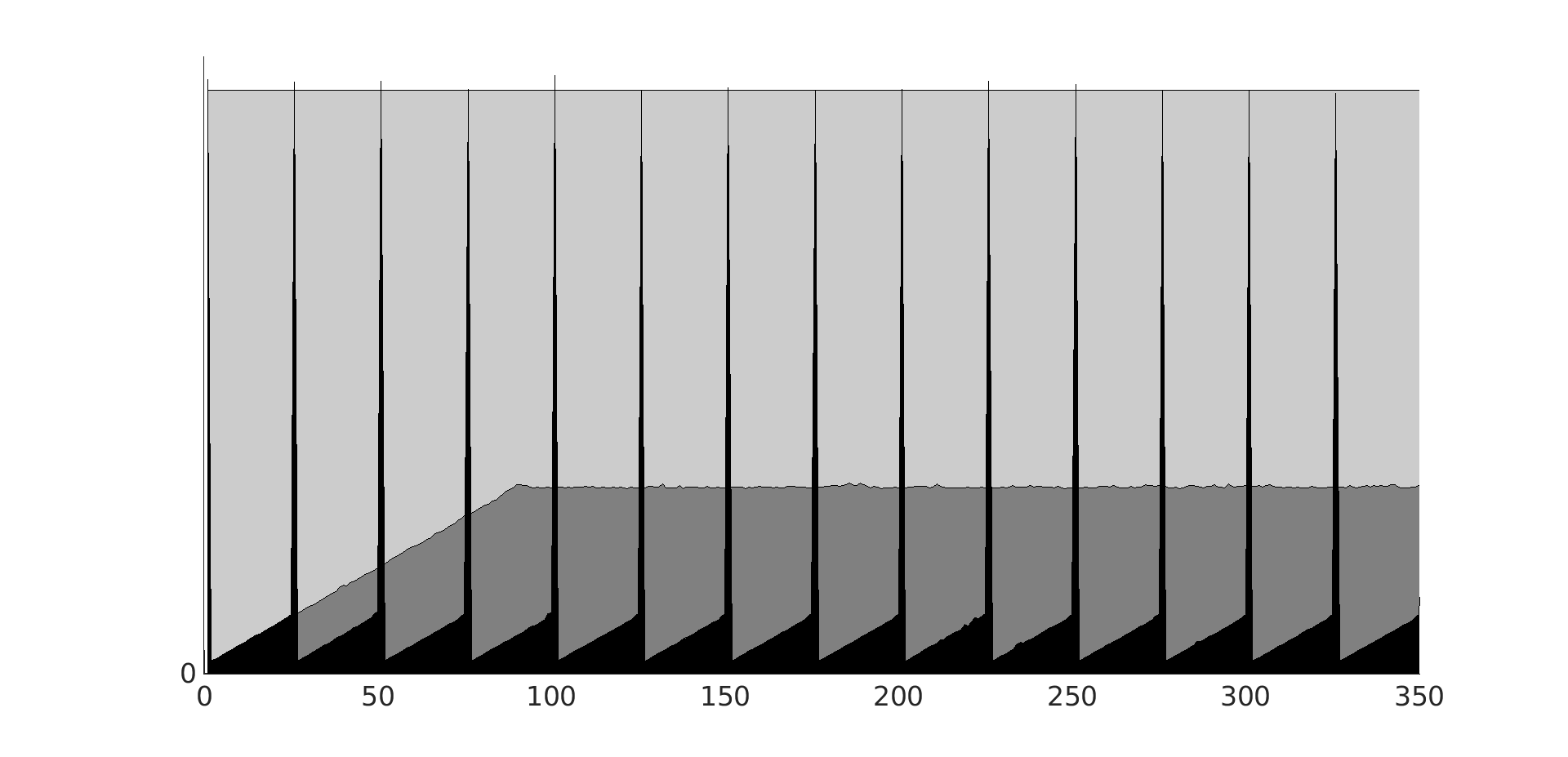}
\caption{Two plots of the computing time required for each step in Algorithm \ref{alg:qhshift} using the QH-Shift method (dark grey), 
the restarted QH-shift method (black) and, for comparison, the classical full QR decomposition using Givens rotations (light grey). 
The total computational cost corresponds to the total black, dark or light grey area. Comparison of the left ($d=15$) and right ($d=45$) 
image confirms that our algorithm pays off with increasing bandwidth. The cut-size is $N=50$ in both cases, so that $n=15\cdot 50=750$ 
and $n=45\cdot 50=2250$, respectively.}
\label{fig:impureLaurent2}
\end{figure}

\subsection{Singular Integral Operators}
Let $c$ and $e$ be continuous functions on $\T$ which define Laurent operators $L(c)$ and $L(e)$ on $\ell^2$ via the Fourier isomorphism (see Section \ref{ssec:laurentOperatorsImpurities} and \cite{BoeSi2}). In our numerical example below, we use the ``whale'' symbol from \cite{BoeSi2} and our ``fish'' symbol from \eqref{eq:impureLaurentsymbol}. Now we interbreed ``whale'' and ``fish'':

Put $S_{\ell^2}:=P-Q$ on $\ell^2$, where $P$ is the orthogonal projection of $\ell^2(\Z)$ onto $\ell^2(\N_0)$ and $Q:=I-P$. Then $S_{\ell^2}$ corresponds to the so-called Cauchy singular integral operator on $L^2(\T)$ (see e.g. \cite[p.130f]{LiBook}). After composition and addition with our multiplication operators by $c$ and $e$ on $L^2(\T)$, we study the bounded linear integral operator
\begin{align}
 Ax(t):=c(t)x(t)+\frac{e(t)}{\pi i}\int_{\T}\frac{x(s)}{s-t}ds,\;\;t\in\T\label{eq:singint}
\end{align}
on $L^2(\T)$, where the integral has to be understood in the sense of the Cauchy principal value. $A$ from \eqref{eq:singint} can be identified with $A_{\ell^2}$ on $\ell^2(\Z)$, where
\begin{align*}
 A_{\ell^2}=&L(c)+L(e) S_{\ell^2}=L(c)(P+Q)+L(e)(P-Q)\\
 =&L(c+e)P+L(c-e)Q=L(a)P+L(b)Q\quad\textrm{with}\quad a:=c+e,\ b:=c-e.
\end{align*}
The functions $a,b$ are continuous on $\T$ and, analogously to \eqref{eq:laurentsymbol}, the matrix of $A_{\ell^2}$ is of the form
\begin{align*}
A_{\ell^2}=\left(
\begin{array}{cccc|cccc}
\ddots&\ddots&\ddots&\ddots&\ddots&\ddots&\ddots&\ddots\\
\ddots&a_0&a_{-1}&a_{-2}&b_{-3}&b_{-4}&b_{-5}&\ddots\\
\ddots&a_1&a_0&a_{-1}&b_{-2}&b_{-3}&b_{-4}&\ddots\\
\ddots&a_2&a_1&a_0&b_{-1}&b_{-2}&b_{-3}&\ddots\\
\ddots&a_3&a_2&a_1&b_0&b_{-1}&b_{-2}&\ddots\\
\ddots&a_4&a_3&a_2&b_1&b_0&b_{-1}&\ddots\\
\ddots&\ddots&\ddots&\ddots&\ddots&\ddots&\ddots&\ddots
\end{array}\right).
\end{align*}
As there are only $N+1$ distinct submatrices formed by $N$ consecutive columns of $A_{\ell^2}$, our algorithm can be efficiently applied (i.e. only $N+1$ positions $k$ have to be considered; actually some more positions are needed for the adjoint) -- similarly to the situation in Section \ref{ssec:laurentOperatorsImpurities}.

We again define functions $F_l$ and $F_u$ as in \eqref{eq:boundaryFuncl} and \eqref{eq:boundaryFuncu} respectively. This time we want to approximate $\speps(A_{\ell^2})$ for many values of $\eps$ and therefore use a simple grid-based approach. We determine a finite grid $G\subset\C$ in the complex plane and then apply the QH-Shift method twice for every $\lambda\in G$. The result has been illustrated in Figure \ref{fig:singularint}.
\begin{figure}[h]
\includegraphics[width=0.5\linewidth]{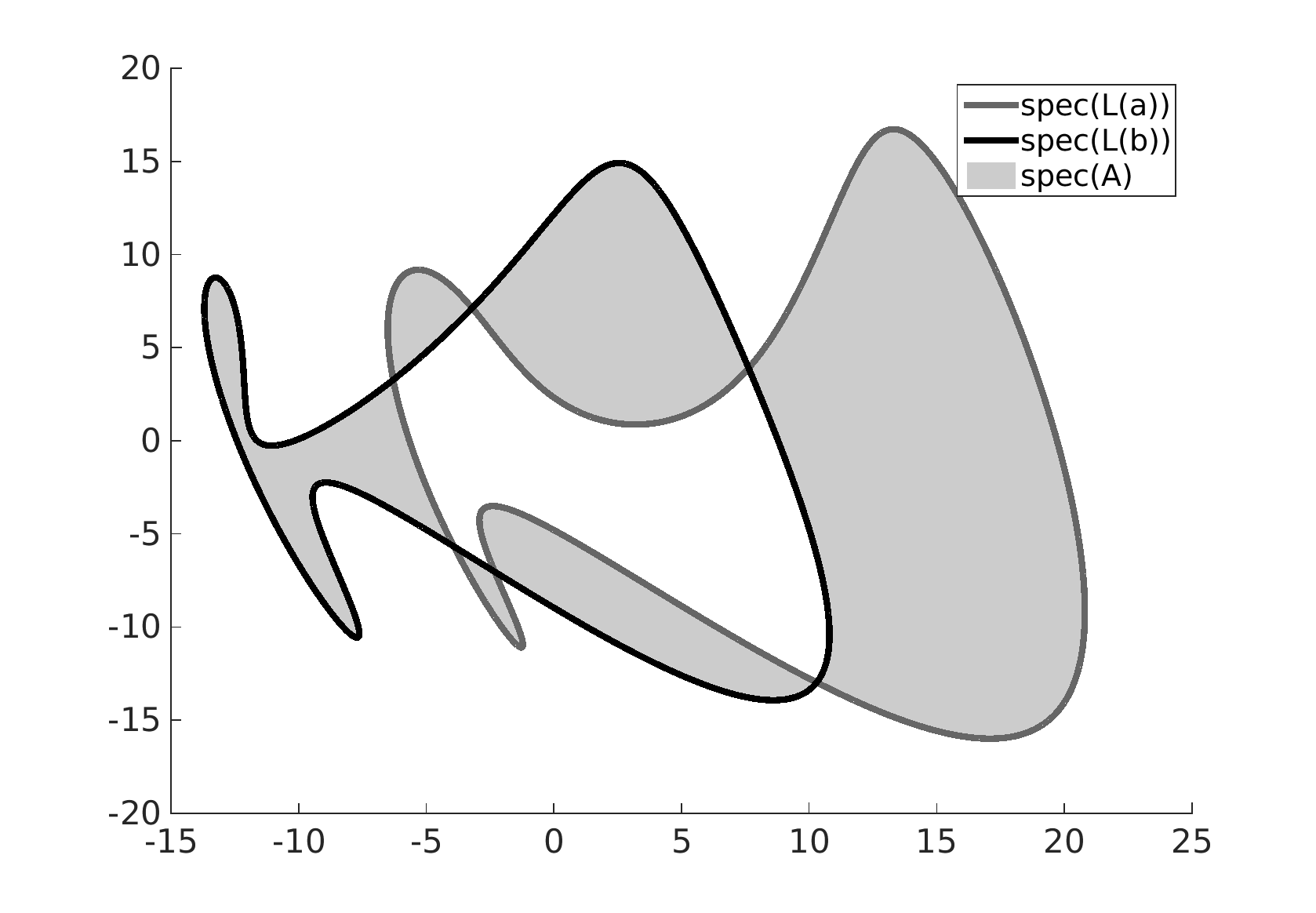}
\includegraphics[width=0.5\linewidth]{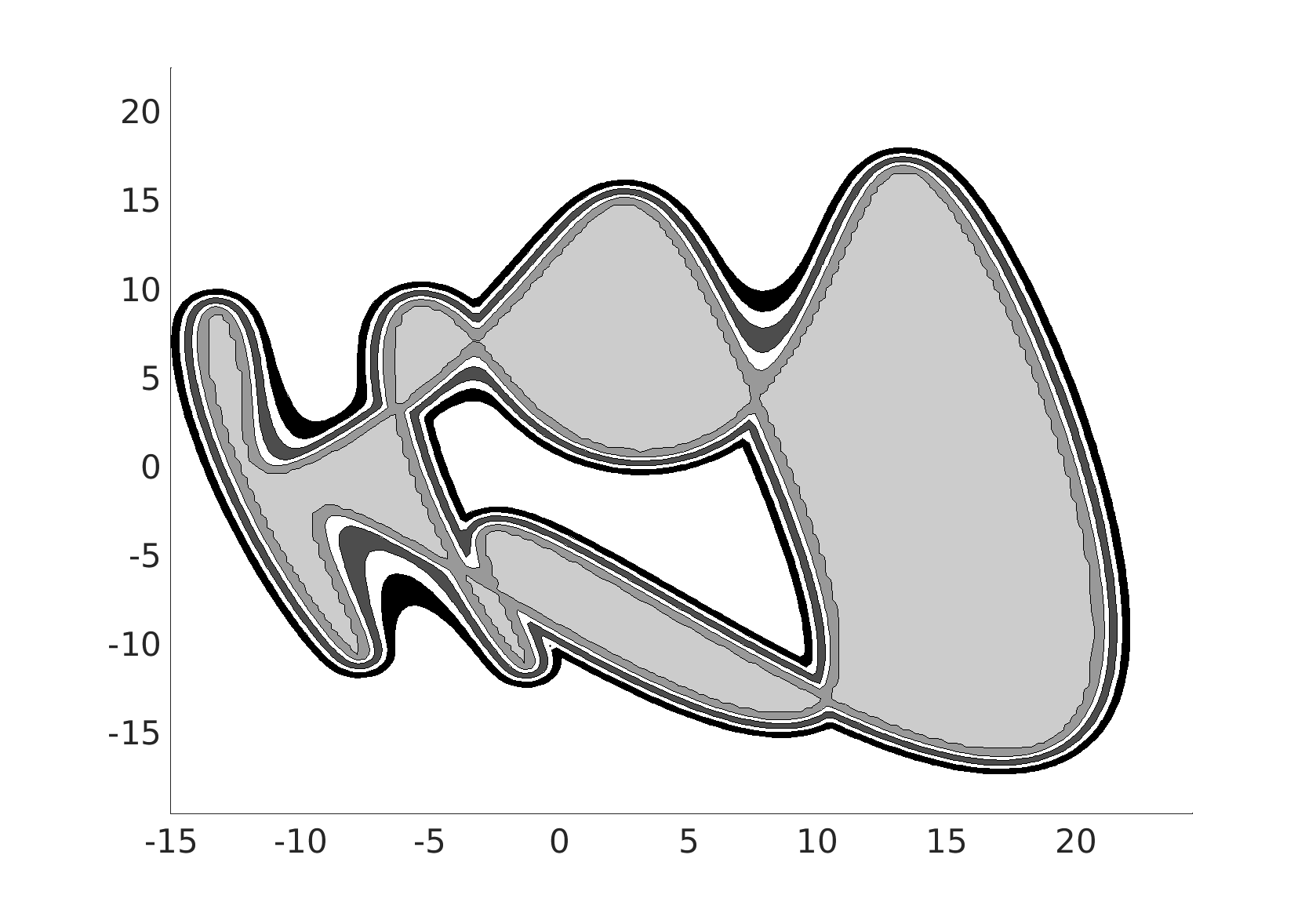}
\caption{Left: The spectrum of the Laurent operators and the resulting operator $A$. Right: Estimations of $\speps(A)$ for the three values $\eps=0.01,0.5,1.0$ (from light to dark grey) using $d=10$ and $N=200$. Each grey area contains the contour line of $\speps(A)$ for one of the three values of $\eps$. The higher we choose $N$, the smaller and therefore more accurate these areas become. The lightest grey area is contained in all three pseudospectra and is depicted here to clarify which parts of the plane (on which side of the contour lines) belong to the pseudospectra and which do not.}
\label{fig:singularint}
\end{figure}


\vfill
\noindent {\bf Authors' addresses:}\\
\\
Marko Lindner\hfill \href{mailto:lindner@tuhh.de}{\tt lindner@tuhh.de}\\
Torge Schmidt\hfill \href{mailto:torge.schmidt@tuhh.de}{\tt torge.schmidt@tuhh.de}\\
Institut Mathematik\\
TU Hamburg (TUHH)\\
D-21073 Hamburg\\
GERMANY

\end{document}